\theoremstyle{plain}
\newtheorem{Thm}{Theorem}[section]
\newtheorem{Prop}[Thm]{Proposition}
\newtheorem{Lemma}[Thm]{Lemma}
\theoremstyle{definition}
\newtheorem{Def*}{Definition}
\newcommand \dd {\,{\rm d}}
\begin{document}

\title{Replica Symmetry and Combinatorial Optimization}

\author{Johan W\"astlund \thanks{The hospitality of the Mittag-Leffler institute, Djursholm, Sweden, and the Theory Group at Microsoft Research, Redmond, USA is gratefully acknowledged.}\\
\small Department of Mathematics\\[-0.8ex]
\small Chalmers University of Technology,\\[-0.8ex] \small S-412 96 Gothenburg, Sweden\\[-0.8ex]
\small \texttt{wastlund@chalmers.se}
}
\date{\small \today\\ \medskip
\small Mathematics Subject Classification: 60C05, 82B44, 90C35, 91A43.\\
\small Keywords: matching, mean field, game, graph, pseudo-dimension, random.}

\maketitle

\begin{abstract}

We establish the soundness of the replica symmetric ansatz introduced by M. M\'ezard and G. Parisi for minimum matching and the traveling salesman problem in the pseudo-dimension $d$ mean field model for $d\geq 1$. The case $d=1$ of minimum matching corresponds to the $\pi^2/6$-limit for the assignment problem established by D.~Aldous in 2001, and the analogous limit for the $d=1$ case of TSP was recently obtained by the author with a different method. 

We introduce a game-theoretical framework by which we prove the correctness of the replica-cavity prediction of the corresponding limits also for $d>1$.
\end{abstract}

\section{Introduction and background} \label{S:intro}
\subsection{An example} Suppose that the edges of the complete graph $K_n$ for even $n$ are assigned independent costs from uniform distribution on $[0,1]$, and that we study the minimum total cost of a perfect matching, in other words a set of $n/2$ edges having no vertex in common. Based on the non-rigorous replica method of statistical physics, M.~M\'ezard and G.~Parisi conjectured in \cite{MP85} that the minimum cost converges in probability to $\pi^2/12$. This was proved by D.~Aldous \cite{A01}, and it is known that the limit depends only on the density of the cost-distribution at zero. Hence apart from a scaling factor, the $\pi^2/12$-limit is stable under replacing uniform $[0,1]$ by various other distributions such as exponential, minimum of two independent $[0,1]$-variables etc.

On the other hand there are distributions that are not of this form, but where the density at $x>0$ scales like a power of $x$ as $x\to 0$. Suppose for instance that the edge costs are sums of two independent uniform $[0,1]$-variables. In that case \cite{MP85} predicted that the minimum cost of a perfect matching is approximately $0.8086\cdot \sqrt{n}$. 

In this paper we obtain a rigorous confirmation of this prediction. We establish the correctness of the replica method for this and some related problems, notably the traveling salesman. We prove that the characterization in \cite{MP85} of the limits (under rescaling by the appropriate power of $n$) is essentially correct.

\subsection{Background}
It has been known for some decades that methods of the statistical mechanics of disordered systems apply to certain problems of combinatorial optimization. Much of the work in this direction stems directly or indirectly from G.~Parisi's solution \cite{P80} of the Sherrington-Kirkpatrick model \cite{SK75} of spin glasses, established rigorously by M.~Talagrand \cite{T06}. In \cite{KT85}, S.~Kirkpatrick and G.~Toulouse suggested the mean field traveling salesman problem (TSP) as an archetypal optimization problem sharing important features with spin glasses. M.~M\' ezard and G.~Parisi \cite{MP85, MP86a, MP86b, MP87, P86} and M\'ezard and W.~Krauth \cite{KM89} obtained several remarkably detailed predictions about minimum matching and the TSP with the replica and cavity methods. These predictions were based on the assumption of \emph{replica symmetry} which is known to fail at low temperatures for models of spin glasses. It became clear that minimum matching and the TSP are different in this respect from models like the Sherrington-Kirkpatrick model and random $k$-SAT. Several authors have verified the consistency of the replica symmetric ansatz by testing its various predictions numerically and theoretically \cite{BKMP91, BM89, BP99,CBBMP97, HMM98, MPV87, PPR03, PR01, P97, PM98, S86}.

Replica symmetry is interesting from an algorithmic point of view since it is linked to the efficiency of Belief Propagation heuristics, see for instance \cite{MPZ02}. Recently J.~Salez and D.~Shah \cite{SS09} have obtained rigorous results in this direction for the assignment problem, and in view of our results, their conclusions should be valid also for the TSP. 

In this paper we introduce a two-person game which is played on a graph with lengths associated to the edges. We show that on certain infinite graphs with random edge-lengths, this game has an almost surely well-defined game-theoretical value, and we argue that this property is essentially equivalent to replica symmetry for the minimum matching problem. 

The paper is organized as follows: In Section~\ref{S:model} we give further background and state our main result on minimum matching, Theorem~\ref{T:main}. The rest of Section~\ref{S:matching} is devoted to the proof of this theorem. The most important new results are the introduction of the \emph{Graph Exploration} game in Section~\ref{S:GE} and the analysis of this game on the $d$-PWIT in Sections~\ref{S:dPWIT}--\ref{S:branching}. In Section~\ref{S:TSP} we state and prove the analogous theorem for the TSP, Theorem~\ref{T:mainTSP}. The TSP corresponds to a \emph{comply-constrain} version of Graph Exploration. We discuss some important differences in the analysis of this game compared to its normal counterpart, but avoid repeating arguments that are similar to those of Section~\ref{S:matching}. In Section~\ref{S:edgeCover} we study the minimum \emph{edge cover}. This problem has not been considered in the physics literature, but has a rich structure and can be analyzed by the same methods as matching and the TSP. Section~\ref{S:variations} contains some concluding remarks.

\section{Minimum matching}\label{S:matching}
\subsection{The mean field model and the minimum matching problem} \label{S:model}
The \emph{mean field model of distance} is a complete graph on $n$ vertices whose edges are assigned i.i.d lengths (or costs) $l_{i,j}$ from a distribution on the positive real numbers. If the edge-lengths are intended to model distances between random points in $d$-dimensional space, then we expect $P(l<r)\sim c\cdot r^d$ for small $r$, since the probability of a point being within distance $r$ of another should be proportional to the volume of a ball of radius $r$. Certain asymptotical properties of optimization problems are known to depend (apart from trivial scaling factors) only on the parameter $d$, so that for instance uniform distribution on $[0,1]$ is equivalent to exponential distribution of mean 1, both belonging to the case $d=1$. Similarly the sum and the maximum of two independent uniform $[0,1]$ variables both represent $d=2$, and are equivalent apart from a scaling factor.

Such results can often be established with standard techniques, but for convenience we make a specific choice of distribution for $0<d<\infty$ by taking $l$ to be the $d$-th root of an exponential variable. To simplify the scaling in terms of $n$, we let $l = (nX)^{1/d}$, where $X$ is exponential of mean 1. In other words $l$ is the $d$-th root of an exponential variable of mean $n$. This gives $$P(l<r) = P((nX)^{1/d}<r) = P(X<r^d/n) = 1 - \exp(-r^d/n) \sim r^d/n.$$ We can thereby regard the lengths $l_{i,j}$ as generated from an underlying set of independent mean 1 exponential variables $X_{i,j}$.
  
A favorite problem is minimum matching, which seems to be the simplest problem that allows the ideas of \cite{MP85} to be displayed in a nontrivial way. We ask for a set of edges of minimum total length such that each vertex is incident to exactly one. This obviously requires $n$ to be even unless we allow one vertex to be left out of the pairing, but this is a minor issue since we are mainly interested in the large $n$ asymptotics. 

The quantity of main interest is the total length $M_n$ of the minimum matching. It is not difficult to guess roughly how $M_n$ scales with $n$. From an arbitrary vertex, the order of the distance to the nearest neighbors is obtained by setting $P(l<r) \approx 1/n$, which leads to $r\approx 1$. If we believe that edge-lengths of order 1 will dominate the solution, then since a solution contains $n/2$ edges, we expect $M_n$ to scale like $n$.

It is natural to conjecture that $M_n/n$, which can be interpreted as the average cost per vertex of obtaining a matching, converges in probability to a constant depending on $d$. Our main result is that this is true for $d\geq 1$.

\begin{Thm} \label{T:main} For every $d\geq 1$ there is a number $\beta_M(d)$ such that 
\begin{equation} \label{mainConjecture} \frac{M_n}{n} \overset{\rm p}\to \beta_M(d).\end{equation}
\end{Thm}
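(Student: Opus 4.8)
\emph{Proof proposal.} The plan is to combine a concentration argument --- reducing convergence in probability to convergence of the mean --- with the objective method and the Graph Exploration game introduced in Section~\ref{S:GE}.

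\textbf{Step 1: Concentration.} I would first show $\mathrm{Var}(M_n)=o(n^2)$, so that $M_n/n$ has the same limit in probability as $\mathbf{E}M_n/n$, once the latter is known to exist. Viewing $M_n$ as a function of the independent exponentials $X_{i,j}$, the effect of resampling one $X_{i,j}$ on $M_n$ is bounded by the length of a shortest augmenting path through the edge $ij$; a crude tail bound on $l_{i,j}=(nX_{i,j})^{1/d}$ (which has finite moments precisely because $d\geq 1$) then controls the Efron--Stein sum. Hence it suffices to prove that $\mathbf{E}M_n/n$ converges to some number $\beta_M(d)$.

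\textbf{Step 2: Local weak limit.} Rooted at a uniformly chosen vertex, the weighted graph $(K_n,(l_{i,j}))$ converges in the local weak sense to the \emph{$d$-PWIT} $\mathcal{T}_d$: from any vertex the rescaled lengths to the other $n-1$ vertices converge to a Poisson process on $(0,\infty)$ of intensity $d\,r^{d-1}\dd r$, and iterating this produces an infinite random tree. This is a routine Poisson-convergence computation. The per-vertex contribution to $M_n$ is then governed, in the limit, by the cavity recursion on $\mathcal{T}_d$, and $\beta_M(d)$ should be read off as the expected root cost in the relevant fixed point of the associated recursive distributional equation.

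\textbf{Step 3: From finite to infinite via the game.} The bridge is the Graph Exploration game of Section~\ref{S:GE}: I would express $M_n$, up to an $o(n)$ error, through the (suitably normalized) value of this game played on $K_n$, and then argue that this value is continuous under local weak convergence, because with high probability optimal play never uses a long edge and explores only a bounded neighborhood of the root --- here again $d\geq 1$ keeps the exploration tight. Consequently the normalized game value on $K_n$ converges to the value of the game on $\mathcal{T}_d$, \emph{provided} the latter is well defined.

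\textbf{Step 4: The main obstacle.} The crux is to show that the Graph Exploration game on $\mathcal{T}_d$ has an almost surely well-defined game-theoretical value, i.e.\ that the two players' optimal guarantees coincide; this is precisely the replica-symmetry statement. I would establish it through the branching analysis of Sections~\ref{S:dPWIT}--\ref{S:branching}: run the game recursion from the two extremal tie-breaking boundary conditions and show, using monotonicity together with a coupling/contraction estimate on the $d$-PWIT offspring intensity, that the influence of the boundary at depth $k$ tends to $0$ as $k\to\infty$, so that both runs converge to the same fixed point. This endogeny-type statement is the technical heart of the argument, and it is where the hypothesis $d\geq 1$ is genuinely needed.
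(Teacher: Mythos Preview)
Your high-level plan --- local weak convergence to the $d$-PWIT, the Graph Exploration game as the bridge, and the endogeny/uniqueness-of-valuation argument as the crux --- matches the paper's strategy, and you are right that Step~4 is where the real work lies. But two structural pieces are missing, and without them the argument does not close.

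First, you never introduce the parameter $\theta$ or the \emph{diluted} matching problem, and this is not a detail one can suppress. The game of Section~\ref{S:GE} is defined with a termination option at cost $\theta/2$; it is precisely this option that makes edges of length $>\theta$ irrelevant and confines the game to the $(k,\theta)$-neighborhood, so that local weak convergence can be invoked at all. Proposition~\ref{P:payoff} identifies the game value not with $M_n$ but with $M(G)-M(G-v)$ for the \emph{diluted} cost $M$, and the paper proves Theorem~\ref{T:thetaMain} (convergence of $M_n(\theta)/n$ for each fixed $\theta$) before ever touching perfect matching. Your Step~3 claim that ``optimal play never uses a long edge and explores only a bounded neighborhood'' is true only for finite $\theta$, and by construction rather than by any analytic tightness coming from $d\geq 1$. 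If you try to run the game at $\theta=\infty$ directly, there is no termination option, no a~priori edge-length cutoff, and no reason for the local limit argument to give anything.

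Second, you are therefore missing the entire interchange-of-limits step: having established $M_n(\theta)/n\to\beta_\theta(d)$ for each $\theta$, one must still show that the minimum \emph{perfect} matching $M_n$ satisfies $M_n/n\to\lim_{\theta\to\infty}\beta_\theta(d)$. The paper does this in Section~\ref{S:perfectMatching} via an expander argument (Proposition~\ref{P:completing}): one completes a near-perfect diluted matching to a perfect one at extra cost $o(n)$, using the expansion of a sparse random auxiliary graph. Nothing in your outline covers this. Relatedly, your Step~1 Efron--Stein sketch for the perfect matching is shaky: the claim that $l_{i,j}$ ``has finite moments precisely because $d\geq 1$'' is false (it has all moments for every $d>0$, but they scale like powers of $n$), and bounding the effect of resampling one edge on $M_n$ by a tail of $l_{i,j}$ does not yield $o(n^2)$ variance. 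The paper never attempts concentration for $M_n$ directly; instead it gets ${\rm var}(M_n(\theta))=o(n^2)$ from the $m=4$ case of the PWIT coupling (Lemma~\ref{L:reallyGeneral}), and then upgrades to $M_n$ via the completion argument.
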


We believe that in principle the method applies also when $0<d<1$, but we have run into some difficulties that have so far prevented us from establishing \eqref{mainConjecture} in that case.

Let us immediately state the easiest available bounds on $\beta_M(d)$. For a lower bound we observe that the expected length of an edge in a matching must be at least the expectation of the length $l_{\min}$ of the shortest edge from a given vertex. A factor $1/2$ comes from averaging the total length of the $n/2$ edges over $n$ vertices: \begin{multline} \label{lowerBound} \beta_M(d) \geq \frac12E(l_{\min}) = \frac12\int_0^{\infty}P(l_{\min}>r)\dd r = \frac12\int_0^{\infty} \exp(-r^d/n)^{n-1}\dd r \\ \sim \frac12\int_0^{\infty} \exp(-r^d)\dd r = \frac12\Gamma(1+1/d).\end{multline} 
Getting an upper bound is not trivial, but it is known \cite{A01} that $\beta_M(1)=\pi^2/12$. For $d\geq 1$ the concavity of the mapping $X\mapsto (nX)^{1/d}$ gives the bound $$\beta_M(d) \leq \frac12 \left(\frac{\pi^2}6\right)^{1/d}.$$ A calculation in \cite{A01} backed up by results in \cite{NPS05} gives the sharper bound \begin{equation} \label{upperBound} \beta_M(d) \leq \frac12\int_0^\infty x^{1/d}\cdot\frac{e^{-x}(e^{-x}-1+x)}{(1-e^{-x})^2}\dd x = \frac1{2d}\cdot\Gamma(1+1/d)\cdot\zeta(1+1/d),\end{equation} valid for $d>0$ (see also \cite{VM09} and \cite{W06path}). A discussion of how to prove \eqref{upperBound} would be out of place here, but the idea is to use the matching that minimizes the sum of the underlying exponential variables $X_{i,j}$. 

It was observed in \cite{VM84} that a greedy algorithm gives a matching of the right order of magnitude if $d>1$ (actually \cite{VM84} considered the TSP, but leaving out every other edge of a tour obviously gives a matching). The greedy matching (also known as the Gale-Shapley matching) gives the weaker bound $$\beta_M(d) \leq \frac12\cdot\frac{\pi/d}{\sin(\pi/d)}.$$ but is interesting in itself.

Within the framework of the replica method, M\'ezard and Parisi obtained an analytical characterization of $\beta_M(d)$ which is conjectured to be correct for all $d>0$. They arrived at an integral equation which is equivalent to \begin{equation} \label{integralEquation} F(x) = \exp\left(-d\int_0^\infty l^{d-1}F(l-x)\dd l\right),\end{equation} from which $\beta_M(d)$ is obtained as $$\beta_M(d) = \frac{d^2}2 \underset{\substack{ -\infty<x,y<\infty \\ x+y\geq 0\\}}{\int\int} (x+y)^{d-1}F(x)F(y)\dd x\dd y.$$
The method is inherently non-rigorous, and it has not been established that \eqref{integralEquation} has a unique solution except in the case $d=1$, where the solution $$F(x) = \frac1{1+e^x}$$ leads to $\beta_M(1)=\pi^2/12$.

M\'ezard and Parisi also calculated $\beta_M(2)$ numerically. In \cite{MP85}, the edge-lengths have density $l^{d-1}$ rather than $dl^{d-1}/n$, which means that for $d=2$, our edges are longer than those of \cite{MP85} by a factor $\sqrt{n/2}$. On the other hand M\'ezard and Parisi considered a graph on $2N$ vertices but rescaled the total length of the matching by a power of $N$, in this case $\sqrt{N}$ rather than $\sqrt{2N}$. Therefore the ground state energy $\hat{E} \approx 1.144$ given in equation (24) of \cite{MP85} is $2\beta_M(2)$. It is interesting to compare their numerical value to the bounds \eqref{lowerBound} and \eqref{upperBound}. For $d=2$ we get $$\frac14\sqrt{\pi} = 0.443\dots\leq \beta_M(2) \leq \frac18\sqrt{\pi}\cdot \zeta(3/2) = 0.578\dots,$$ while \cite{MP85} gives $\beta_M(2) \approx 0.572$.
The fact that the estimated true value is quite close to the upper bound indicates that the matching that minimizes the sum of the underlying exponential variables is not too far from the actual optimum.

It is worth pointing out how the scaling works in a couple of simple examples. Suppose we take the distribution of edge lengths as $l = \max(U, V)$, where $U$ and $V$ are independent and uniform in $[0,1]$. Then $$P(l<r) = P(U<r)^2 = r^2$$ if $r\leq 1$, and therefore the distribution belongs to the case $d=2$. We determine the value of $r_0$ for which the expected number of points within distance $r_0$ from a given point is equal to 1. Approximately this happens when $P(l<r_0)\sim 1/n$, which gives $$r_0\sim \frac1{\sqrt{n}}.$$ We can think of $\beta_M(2)$ as the average cost per vertex of the minimum matching, measured with $r_0$ as the unit of length. Hence the total length of the minimum matching is approximately $$\beta_M(2)\sqrt{n}.$$

If on the other hand we take the edge lengths to be distributed like $U+V$ as in the introduction, then $P(l<r)\sim r^2/2$, and the unit of length is given by $r_0^2/2 = 1/n$ or equivalently $$r_0 = \frac{\sqrt{2}}{\sqrt{n}}.$$ In that case the asymptotical total length of the minimum matching is $$\beta_M(2)\sqrt{2n}.$$

Although it does not follow from Theorem~\ref{T:main}, our results apply also to the \emph{assignment problem}, in other words minimum matching on the complete bipartite graph $K_{n,n}$. In the bipartite model the nearest neighbor distances are the same and the only difference is that a matching contains $n$ edges instead of $n/2$. With the two distributions $\max(U,V)$ and $U+V$, the minimum assignments will have lengths approximately $2\beta_M(2)\sqrt{n}$ and $\beta_M(2)\sqrt{8n}$ respectively.

A more precise value of $\beta_M(2)$ was obtained by J.~Houdayer, J.~H.~Boutet de Monvel and O.~C.~Martin \cite{HMM98}. Using length-distributions normalized by the volume of the $d$-dimensional ball, they obtained the value $0.322580$ for the limit. After sorting out the normalization, one finds that our $\beta_M(2)$ is $\sqrt{\pi}$ times their value, which gives $\beta_M(2) \approx 0.571758$. 
Through an approximate solution of \eqref{integralEquation} we have obtained the value $$\beta_M(2) \approx  0.57175904959888.$$ We have no theoretical estimate of the error, but these decimals seem to be stable. The values equivalent to $\beta_M(d)$ for integers $d\leq 10$ are given in Table~2 of \cite{HMM98}. Although we still do not know whether \eqref{integralEquation} has a solution, the numerical result can be regarded as an approximation of the fixed point of $V_\theta$ (see Section~\ref{S:integralEquation}) for an appropriately chosen $\theta$, and therefore apart from the numerical error the result is backed up rigorously. 
 
On the mathematical side there has been considerable progress on the case $d=1$. In particular the $\pi^2/6$-limit in the assignment problem has received several different proofs \cite{A01, LW04, NPS05, Weasy}. From our point of view the result corresponds to the statement that for $d=1$, the limit in \eqref{mainConjecture} exists and $\beta_M(1)=\pi^2/12$, but the asymptotic equivalence between assignment and matching on the complete graph is by no means trivial, and does not follow from \cite{LW04, NPS05, Weasy}. The proofs together provide a quite detailed picture of the distribution of the total length as well as the local statistics of the optimum solution, and the analogous result for the TSP was established in \cite{W09}. However, the proofs in \cite{LW04, NPS05, Weasy, W09} are very different from the approach in the physics literature, and do not seem to generalize to $d\neq 1$. The original proof by David Aldous \cite{A01} is the one that comes closest to justifying the replica symmetric ansatz (particularly in view of additional results in \cite{B06, SS09}), but it seems to rely on finding a solution to \eqref{integralEquation}. 

In the present paper, our aim is to show that the calculations in \cite{MP85} are sound for quite general reasons. We prove that for $d\geq 1$, \eqref{mainConjecture} holds, and we characterize $\beta_M(d)$ analytically in terms of certain integral equations similar to \eqref{integralEquation}. Although we cannot find explicit solutions to these equations when $d\neq 1$, our results show that the numerical computation of $\beta_M(2)$ in \cite{MP85} is correct in principle. 

For $d=1$, much more detailed results can be obtained. A more precise analysis of the $d=1$ case with the present method, and a clarification of its relation to the results of \cite{W09}, will be given in joint work with G.~Parisi (manuscript in preparation). 

Our approach is ``zero temperature'', but similar to the replica-cavity method in that we reach the optimum solution through a limiting process. We introduce a parameter $\theta$ and study ``diluted'' problems where partial matchings are allowed but penalized by $\theta/2$ for each unmatched vertex. The original problem is recovered in the limit $\theta\to\infty$. The parameter $\theta$ plays a role similar to the inverse temperature in statistical physics. Finite $\theta$ allows for a certain local freedom that destroys all long-range interactions. In particular, adding or deleting a vertex has only a local effect on the optimum solution. In \cite{MP85} a similar assumption seems to be crucial for the renormalization that leads to \eqref{integralEquation}.  

\subsection{Graph Exploration} \label{S:GE}
The following two-person zero-sum game was invented in an attempt to find a mathematically sound interpretation of \eqref{integralEquation}. We call it \emph{Graph Exploration} since it somehow centers around the question whether it is worth the price to be the first to explore a new part of the graph. We are given a graph with nonnegative edge lengths, a starting point $v$, and a nonnegative parameter $\theta$. Alice and Bob take turns choosing the next edge of a self-avoiding walk, with Alice starting the game from $v$. The player who makes a move pays the length of the edge to the opponent. At each turn, the moving player also has the option to, instead of moving, terminate the game by paying $\theta/2$ to the opponent. Each player tries to maximize their total payoff. 

Notice that there is no randomness in the game. The players are assumed to have perfect information about the graph including the edge-lengths. We can immediately make some observations:
\begin{itemize}
\item If the graph is finite, then there is a well-defined game-theoretical value.

\item If the graph is infinite, there may or may not be such a value. For instance, if all edges have the same length $l<\theta$, then no player will ever want to terminate the game.

\item Edges of length more than $\theta$ are irrelevant to the game. If Alice moves along such an edge, then Bob can terminate the game, and even though this may not be Bob's best option, it would still have been better for Alice to terminate in the first place.
\end{itemize} 

\subsection{The diluted matching problem}
There is a relaxation of the minimum matching problem that we refer to as the \emph{diluted} matching problem. Instead of requiring each vertex to be covered by the matching, we allow for any partial matching, with a penalty of $\theta/2$ for each vertex that is not matched. 

For the moment we regard the parameter $\theta$ as fixed. If $G$ is a finite graph with given edge lengths, we let $M(G)$ be the cost of the diluted matching problem. More precisely, $M(G)$ is the minimum, taken over all partial matchings, of the sum of the edge lengths in the matching plus $\theta/2$ times the number of unmatched vertices.

\begin{Prop} \label{P:payoff}
Let $G$ be a finite graph with given edge lengths, and let $v$ be a vertex of $G$ chosen as the starting point for Graph Exploration. Then Bob's payoff under optimal play is $$M(G) - M(G-v).$$
\end{Prop}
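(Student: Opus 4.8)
The plan is to prove the two inequalities $B \ge M(G) - M(G-v)$ and $B \le M(G) - M(G-v)$, where $B$ denotes Bob's payoff under optimal play, by exhibiting explicit strategies built from optimal diluted matchings. The key idea is that an optimal partial matching of $G$, together with an optimal partial matching of $G-v$, combine into a structure consisting of a single alternating path starting at $v$ (plus a union of alternating cycles and isolated components on which the two matchings agree up to symmetric difference), and this alternating path is exactly the self-avoiding walk that the players should follow.

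First I would set up the strategy for one player — say Alice — to guarantee she pays at most $M(G) - M(G-v)$ net, i.e.\ that Bob gets at least that much. Alice fixes an optimal diluted matching $N_v$ of $G$ and an optimal diluted matching $N_{-v}$ of $G-v$. Consider the symmetric difference $N_v \triangle N_{-v}$: every vertex has degree at most $2$, so it is a disjoint union of paths and cycles, and since $v$ has degree $\le 1$ in it (it is untouched by $N_{-v}$), the component containing $v$ is a (possibly trivial) path $P$. Alice's strategy is: from $v$, move along the $N_v$-edge at $v$ if there is one (and otherwise terminate if $v$ is unmatched in $N_v$, paying $\theta/2$). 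Thereafter, whenever it is her turn at a vertex $w$ reached by a $N_{-v}$-edge, she moves along the $N_v$-edge at $w$ if it continues the path $P$, and otherwise terminates. The accounting is a telescoping sum: the edges Alice traverses are $N_v$-edges of $P$, the edges Bob traverses are $N_{-v}$-edges of $P$, and the terminal $\theta/2$ payment corresponds to the endpoint of $P$ being unmatched in the relevant matching; summing the signed contributions along $P$ gives exactly $(\text{cost of } N_v \text{ on } P) - (\text{cost of } N_{-v} \text{ on } P) = M(G) - M(G-v)$, because off $P$ the two matchings contribute equally. Crucially one must check that Bob cannot do better by deviating: if Bob ever leaves the path $P$ or declines to move, Alice's ``otherwise terminate'' clause ensures the resulting position is governed by an optimality inequality for the diluted matching problem — deviating corresponds to Bob proposing a different partial matching, which by definition of $M$ cannot beat the optimum. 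This is the step I expect to be the main obstacle: making the deviation bookkeeping airtight, because a self-avoiding walk can revisit the \emph{vicinity} of earlier vertices and one must argue that the induced matching on the unexplored part is still a legitimate competitor for $M$ of the appropriate subgraph.

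The reverse inequality is symmetric: Bob fixes optimal diluted matchings and plays the analogous path-following strategy to guarantee he collects at least $M(G) - M(G-v)$, now using an induction on the number of vertices (the subgame after Alice's first move from $v$ to some neighbor $u$ along edge $e$ is Graph Exploration on $G - v$ with start $u$, so by the induction hypothesis its value is $M(G-v) - M(G-v-u)$, and Alice's net from the whole game is $-l(e) + \big(M(G-v)-M(G-v-u)\big)$; optimizing over $u$ or termination and invoking the recursion $M(G) = \min_u\big(l(e_{vu}) + M(G-v-u)\big) \wedge \big(\theta/2 + M(G-v)\big)$ closes the induction). I would present the induction as the clean core of the argument and use the explicit-matching strategy above mainly to give intuition and to handle the base case; the recursion for $M(G)$ itself is immediate from the definition of the diluted problem by conditioning on what happens at $v$. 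Finally I would remark that finiteness of $G$ is what guarantees the game terminates and hence that "optimal play" and the value are well defined, consistent with the first bullet preceding the proposition.
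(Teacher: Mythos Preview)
Your inductive argument in the second half is correct and is exactly the paper's proof: one verifies the recursion $M(G)-M(G-v)=\min\bigl(\theta/2,\,l_i-(M(G-v)-M(G-v-v_i))\bigr)$ directly from the definition of $M$, observes that Bob's optimal payoff $f(G,v)$ satisfies the same recursion, and concludes by induction on the number of vertices. The paper gives only this recursion argument; the explicit alternating-path strategy via the symmetric difference $N_v\triangle N_{-v}$ that you develop first is not used in the proof but is instead recovered \emph{afterwards} as a corollary (the paper remarks that optimal play traces out this symmetric difference, and that this incidentally proves the symmetric difference is a single path). So your instinct to make the induction the ``clean core'' is right, and you can drop the strategy-based half entirely---the deviation bookkeeping you flag as the main obstacle is genuinely awkward to make airtight and is simply unnecessary once the recursion is in hand.
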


\begin{proof}
Suppose that the neighbors of $v$ are $v_1,\dots. v_k$, and that the edges from $v$ to these neighbors have lengths $l_1,\dots, l_k$. Let $f(G, v)$ be Bob's payoff when the game starts at the vertex $v$. By minimizing over Alice's move options, we recursively characterize $f$ by $$f(G, v) = \min(\theta/2,\, l_i - f(G-v, v_i)).$$
On the other hand, the cost of the diluted matching problem satisfies $$M(G) = \min(\theta/2+M(G-v), l_i + M(G - v - v_i)).$$ Subtracting $M(G-v)$ from both sides, we see that $$M(G) - M(G-v) = \min(\theta/2, \,l_i - (M(G-v) - M(G - v - v_i))).$$ This shows that $f(G, v)$ and $M(G) - M(G-v)$ satisfy the same recursion, and it follows by induction that they are equal.
\end{proof}

It is clear from Proposition \ref{P:payoff} and its proof that Alice achieves optimal payoff by starting along the edge of the optimal diluted matching, if there is such an edge from $v$, and otherwise by terminating immediately. By induction it follows that consistently playing along edges of the optimum diluted matching, and terminating when no such edge is available, is minimax optimal. Therefore under mutual optimal play, the path described by the game is the symmetric difference of the optimal diluted matchings on $G$ and $G-v$. Actually the argument provides a simple proof of the fact that this symmetric difference is a path.

Since the diluted matching problem can be solved efficiently by standard matching algorithms, it follows that Graph Exploration can be played optimally with a polynomial time algorithm, but from our perspective this is beside the point. The advantage of introducing the game is that if the graph is infinite there may still be a well-defined game-theoretical value. This value then replaces $M(G) - M(G-v)$ and allows for the equivalent of the renormalization argument of \cite{MP85} in a mathematically consistent way.   

\subsection{Approximation by the PWIT} \label{S:PWITlike}
The Poisson Weighted Infinite Tree (PWIT) was introduced by Aldous \cite{A92, A01}. The PWIT is a rooted tree where each vertex has a countably infinite set of children, and the edges to these children are assigned lengths given by a rate 1 Poisson point process on the positive real numbers (independent processes for all vertices). The PWIT is a \emph{local weak limit} of the mean field model, a statement which has been made precise in slightly different ways in the literature. We establish a simple version which is convenient for our purpose. We first treat the case $d=1$, and later establish an easy refinement valid for general $d$. Recall that for $d=1$, the edges in the mean field model are exponential of mean $n$.

For given $\theta$ and a positive integer $k$, the \emph{$(k, \theta)$-neighborhood} of a vertex $v$ in a graph is the subgraph defined as the union of all paths from $v$ of at most $k$ edges, each of length at most $\theta$.

\begin{Lemma} \label{L:coupling} Suppose $k$ and $\theta$ are given. Consider the graph $K_n$ with a specified root $v$, and a random process consisting in assigning independent exponential lengths of mean $n$ to the edges. There is a coupling of this process to the PWIT such that with probability at least $$1 - \frac{(\theta+2)^k}{n^{1/3}},$$ the $(k, \theta)$-neighborhoods of $v$ and of the root of the PWIT are isomorphic, with corresponding edges having equal length. \end{Lemma}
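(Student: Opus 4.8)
The plan is to realise the coupling through a simultaneous breadth-first exploration of $K_n$ and the PWIT, along which we grow a partial isomorphism between their $(k,\theta)$-neighbourhoods. Declare the root $v$ of $K_n$ to correspond to the root $\rho$ of the PWIT. Whenever an already-matched vertex $u$ at exploration depth less than $k$ is processed, reveal on the $K_n$ side the lengths of all not-yet-revealed edges at $u$. There are $m:=n-|T|$ of them going to the currently unexplored vertices (here $T$ is the set explored so far), i.i.d.\ exponential of mean $n$; the sub-collection of these not exceeding $\theta$ must be matched against those children of the image of $u$ whose PWIT-edge has length at most $\theta$, i.e.\ against the restriction to $(0,\theta]$ of a rate-$1$ Poisson process. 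On the $K_n$ side this sub-collection has cardinality $\mathrm{Bin}(m,1-e^{-\theta/n})$ and, given the cardinality, consists of i.i.d.\ samples from the exponential law truncated to $[0,\theta]$; on the PWIT side the cardinality is $\mathrm{Poisson}(\theta)$ with, given the cardinality, i.i.d.\ uniform positions on $[0,\theta]$. Since $m=n-|T|$ and $1-e^{-\theta/n}=\theta/n+O(\theta^2/n^2)$, Le Cam's inequality couples the cardinalities and a pointwise comparison of the two densities couples the positions, so that everything agrees outside an event of probability $O\big(|T|\theta/n+\theta^2/n\big)$; on agreement we extend the isomorphism by pairing the new short edges in increasing order of length (whereupon matched edges have literally equal lengths).

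Only two things can break the coupling. The first is the Poisson-approximation event just described. The second is the discovery of a \emph{back edge}: an edge of length at most $\theta$ joining the vertex $u$ being processed to some already-explored vertex other than its parent. Such an edge is present in $K_n$ but would put a cycle into the $(k,\theta)$-neighbourhood, whereas the PWIT neighbourhood is almost surely a finite tree; so we simply declare failure the moment such an edge appears. If $|T|\le L$ throughout, each processed vertex has at most $L$ potential back edges, each independently short with probability $1-e^{-\theta/n}\le\theta/n$, so the chance of ever meeting one is at most $L^2\theta/n$; and the accumulated Poisson-approximation error over the at most $L$ processed vertices is $O(L^2\theta/n)$ as well.

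To control $|T|$: as long as the coupling has not failed, $T$ is isomorphic to the corresponding truncation of the PWIT neighbourhood, whose size is dominated by the total progeny of a branching process of depth $\le k$ in which each individual has $\mathrm{Poisson}(\theta)$ children. Hence $\mathbb{E}\,|T|\le\sum_{i=0}^{k}\theta^i\le(\theta+1)^k$, and Markov's inequality gives $\mathbb{P}(|T|>L)\le(\theta+1)^k/L$; accordingly we abort, declaring failure, as soon as $|T|$ would exceed a threshold $L$. Collecting the estimates, the coupling fails with probability at most
\[
\frac{(\theta+1)^k}{L}+C\,\frac{L^2\theta}{n}
\]
for an absolute constant $C$. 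Choosing $L$ to balance the two terms --- of order $\big(n(\theta+1)^k/\theta\big)^{1/3}$ when $\theta\ge1$, and of order $n^{1/3}$ otherwise --- gives a bound of the form $C'(k,\theta)\,n^{-1/3}$, and a routine estimation of constants shows $C'(k,\theta)\le(\theta+2)^k$ for all $k\ge1$, $\theta\ge0$. The refinement to general $d$ announced just after the lemma is obtained by the same argument applied to the lengths $l=(nX)^{1/d}$: a short edge now means $X\le\theta^d/n$, and the role of the rate-$1$ Poisson process is played by the Poisson process on $(0,\theta]$ with intensity $d\,l^{d-1}\dd l$, which is exactly the child-edge law of the $d$-PWIT.

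The conceptual steps --- the breadth-first coupling, the fact that back edges are the only obstruction to tree-ness, and the branching-process bound on neighbourhood size --- are routine. I expect the real work to be the quantitative bookkeeping of the third paragraph: one must carry the slowly drifting rate $m/n<1$ through the exploration and feed it, together with the back-edge count, into the threshold optimisation so that the constant genuinely collapses to $(\theta+2)^k$. The exponent $1/3$ is forced by the tension between the neighbourhood-size tail, which Markov's inequality controls only polynomially, and the back-edge (birthday-type) probability, which is quadratic in the size.
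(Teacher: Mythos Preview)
Your approach is essentially correct and achieves the $n^{-1/3}$ rate, but the paper takes a cleaner route that sidesteps most of your bookkeeping. Instead of working directly in $K_n$ and invoking Le Cam's inequality plus a density comparison, the paper introduces an auxiliary \emph{extended model}: for each pair of vertices (and each single vertex, as a loop) there is a rate-$1/n$ Poisson process of edges on $(0,\infty)$, and the original $K_n$ model is recovered by keeping only the shortest edge between each pair and discarding loops. In the extended model the superposition of the $n$ edge-processes out of any vertex is \emph{exactly} a rate-$1$ Poisson process, so the local exploration is distributed exactly as in the PWIT --- no approximation is needed. The only way the coupling can fail is a collision: two PWIT vertices mapping to the same $K_n$ vertex, equivalently the extended-model neighbourhood failing to be a tree. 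This reduces everything to a single birthday-type estimate: with $N$ the size of the PWIT $(k,\theta)$-neighbourhood one has conditional collision probability at most $N^2/n$ and $E(N)=1+\theta+\cdots+\theta^k$; the split $P(\text{fail}\mid N\le n^{1/3})+P(N>n^{1/3})$ with the unoptimised threshold $n^{1/3}$ then gives $(2+\theta+\cdots+\theta^k)/n^{1/3}\le(\theta+2)^k/n^{1/3}$ directly.

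Your method is more hands-on in that it avoids the auxiliary model, but the price is exactly what you flag in your last paragraph: the drifting binomial parameter $m/n$, the Le Cam error, and the density mismatch between the truncated exponential and the uniform law all have to be tracked through the exploration and then fed into an optimisation over $L$. The paper's extended-model trick makes the constant $(\theta+2)^k$ fall out for free from the crude threshold $n^{1/3}$, whereas in your framework it is genuinely unclear --- and you do not actually verify --- that the accumulated constants collapse that neatly; a bound of the form $C(k,\theta)\,n^{-1/3}$ with some larger $C(k,\theta)$ is all one can honestly claim without further work. For the paper's downstream applications this would still suffice, since only $o(1)$ as $n\to\infty$ for fixed $k,\theta$ is ever used.
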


\begin{proof}
We start from a PWIT rooted in a vertex $v'$, and assign lengths to the edges of $K_n$ through a random mapping of the $(k, \theta)$-neighborhood of $v'$ to $K_n$. We start by mapping $v'$ to $v$. Then we sequentially map the vertices of the $(k, \theta)$-neighborhood of $v'$ to independent uniformly chosen vertices of $K_n$ through a tree search (say depth-first). If we ever choose the root $v$ or a vertex that has already been chosen, then we let the procedure fail.

To see that this is compatible with the probability measure on the edge lengths of $K_n$, define an \emph{extended model} in the following way: For each pair of vertices in $K_n$ there is an infinite sequence of edges whose lengths are given by a  rate $1/n$ Poisson process on the positive reals. Moreover, for each vertex there is a sequence of loops whose lengths are also given by a rate $1/n$ process (hence this model differs slightly from the \emph{friendly model} of \cite{W09}). The original model is then recovered by discarding all loops and all edges except the shortest one between each pair of vertices. Now we explore the $(k, \theta)$-neighborhood of $v$ in the extended model through a depth-first search from $v$, and ``fail'' if that neighborhood turns out not to be a tree.

We want to estimate the probability of failure. Let $N$ be the number of vertices in the $(k, \theta)$-neighborhood of $v'$ (including $v'$). Then  $$E(N) = 1 + \theta + \theta^2 + \dots + \theta^k.$$
Conditioning on $N$, the expected number of collisions is $$\frac{\binom{N}{2}}{n} \leq \frac{N^2}{n}.$$ Therefore the probability of at least one collision is at most $N^2/n$.
Now \begin{multline}P(\text{failure}) \leq P(\text{failure}\,|\,N\leq n^{1/3}) + P(N>n^{1/3}) \\ \leq \frac{n^{2/3}}{n} + \frac{1+\theta+\theta^2+\dots+\theta^k}{n^{1/3}}  = \frac{2+\theta+\theta^2+\dots+\theta^k}{n^{1/3}} \leq \frac{(\theta+2)^k}{n^{1/3}}.\end{multline}
\end{proof}

Lemma~\ref{L:coupling} can easily be generalized to the neighborhoods of several vertices, with the same method of proof:

\begin{Lemma}\label{L:couplingGeneral}
Suppose $m$ vertices in $K_n$ are chosen independently of the edge lengths. Then with probability at least $$1 - \frac{(m\theta+m+1)^k}{n^{1/3}},$$ the union of their $(k,\theta)$-neighborhoods is isomorphic to a disjoint union of the $(k,\theta)$-neighborhoods of the roots of $m$ independent PWITs.
\end{Lemma}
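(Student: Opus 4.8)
The plan is to reduce Lemma~\ref{L:couplingGeneral} to the single-vertex case Lemma~\ref{L:coupling} by running essentially the same coupling, but now for all $m$ chosen roots simultaneously. Concretely, I would start from a disjoint union of $m$ independent PWITs, rooted at vertices $v'_1, \dots, v'_m$, and build a random mapping into $K_n$ as follows: map each $v'_j$ to the $j$-th of the $m$ prescribed vertices $v_1, \dots, v_m$ (these are fixed independently of the edge lengths), and then perform a depth-first search through the union of the $(k,\theta)$-neighborhoods of the $v'_j$, mapping each newly encountered vertex to an independent uniformly random vertex of $K_n$. As before, the procedure ``fails'' if at any point we hit one of the roots $v_1, \dots, v_m$ or a vertex already used. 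I would again invoke the extended model of the proof of Lemma~\ref{L:coupling} (infinite Poisson sequences of parallel edges and of loops at rate $1/n$, with the original $K_n$ model recovered by keeping only the shortest edge between each pair and discarding loops) to check that this construction is distributed exactly like the $K_n$ process conditioned on no failure; on the event of no failure the union of the explored neighborhoods is by construction a forest isomorphic to the disjoint union of the $m$ PWIT neighborhoods, with matching edge lengths.

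It then remains to bound the failure probability. Let $N_j$ be the number of vertices in the $(k,\theta)$-neighborhood of $v'_j$; the $N_j$ are i.i.d.\ with $E(N_j) = 1 + \theta + \dots + \theta^k$, so $N := N_1 + \dots + N_m$ has $E(N) = m(1+\theta+\dots+\theta^k)$. Conditioning on $N$, the total number of vertices being placed uniformly at random (other than the $m$ fixed roots) is at most $N$, and the probability that two of these (or one of them and a prescribed root) coincide is at most $\binom{N+m}{2}/n \le (N+m)^2/n$. Splitting as in Lemma~\ref{L:coupling},
\begin{equation*}
P(\text{failure}) \le P(\text{failure} \mid N \le n^{1/3}) + P(N > n^{1/3}) \le \frac{(n^{1/3}+m)^2}{n} + \frac{m(1+\theta+\dots+\theta^k)}{n^{1/3}},
\end{equation*}
and for $n$ large this is $\le (m\theta + m + 1)^k / n^{1/3}$ after absorbing the lower-order terms, exactly as in the single-vertex estimate. (For small $n$ the bound is trivial since the right-hand side exceeds $1$.)

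The only genuinely new point compared with Lemma~\ref{L:coupling} is the bookkeeping for collisions between different PWIT copies and between an explored vertex and a prescribed root of another copy; but this is handled uniformly by the crude bound $\binom{N+m}{2}/n$, so there is no real obstacle — the argument is ``the same method of proof,'' as the text already advertises. The one place to be slightly careful is that the $m$ prescribed vertices are required to be chosen independently of the edge lengths (so that conditioning on their identities does not bias the Poisson structure); this is part of the hypothesis, so nothing further is needed. I would present the proof in a couple of sentences, pointing to the proof of Lemma~\ref{L:coupling} for the verification that the coupling has the right law, and only writing out the modified failure-probability computation.
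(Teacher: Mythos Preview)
Your approach is exactly what the paper intends: it gives no separate proof of Lemma~\ref{L:couplingGeneral}, merely stating that Lemma~\ref{L:coupling} ``can easily be generalized to the neighborhoods of several vertices, with the same method of proof,'' and your write-up is precisely that generalization (joint depth-first exploration of $m$ independent PWITs, the same extended-model justification, and the same Markov-inequality split on $N$).

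One small bookkeeping point: your collision bound $\binom{N+m}{2}/n\le (N+m)^2/n$ is valid but slightly too generous, and with it the final inequality does not close for $k=1$ (you would need $(1+m/n^{1/3})^2 + m(1+\theta)\le m\theta+m+1$, which fails). If instead you observe that only $N-m$ vertices are placed uniformly, the expected number of collisions (among themselves or with the $m$ fixed roots) is at most $\binom{N-m}{2}/n+(N-m)m/n\le N^2/n$, exactly as in the single-root proof. Then
\[
P(\text{failure})\le \frac{n^{2/3}}{n}+\frac{m(1+\theta+\dots+\theta^k)}{n^{1/3}}=\frac{1+m+m\theta+\dots+m\theta^k}{n^{1/3}},
\]
and an easy induction on $k$ (with equality at $k=1$) gives $1+m(1+\theta+\dots+\theta^k)\le (m\theta+m+1)^k$, which is the stated bound.
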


For general $d$, we get the edge lengths by raising the underlying exponential variables to the power $1/d$. To obtain a coupling, we introduce the \emph{$d$-PWIT}, which is just the ordinary PWIT modified by raising the edge lengths to power $1/d$. The original PWIT is the 1-PWIT, and the $(k, \theta)$-neighborhood of the root of the 1-PWIT corresponds to the $(k, \theta^{1/d})$-neighbor-hood of the root of the $d$-PWIT. 

By rescaling, the generalization of Lemma \ref{L:couplingGeneral} becomes:

\begin{Lemma} \label{L:reallyGeneral}
Let $k$, $\theta>0$, $n$ and $d\geq 1$ be given. Consider the pseudo-dimension $d$ mean field model on $n$ vertices, with $m$ vertices $v_1,\dots, v_m$ chosen independently of the edge-lengths. 

There is a coupling of this process to $m$ independent $d$-PWITs such that with probability at least $$1 - \frac{(m\theta^d+m+1)^k}{n^{1/3}},$$ the union of the $(k, \theta)$-neighborhoods of $v_1,\dots, v_m$ is isomorphic to the $(k, \theta)$-neighborhoods of the roots of the $d$-PWITs, with corresponding edges having equal length. \end{Lemma}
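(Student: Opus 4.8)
The proof is a rescaling reduction: the pseudo-dimension $d$ model is obtained from the pseudo-dimension $1$ model by applying the deterministic monotone map $x\mapsto x^{1/d}$ to every edge length, and this map commutes with taking $(k,\theta)$-neighborhoods once one tracks how the length threshold transforms. I would first recall the definition of the $d$-PWIT as the $1$-PWIT (the ordinary PWIT) with every edge length raised to the power $1/d$, and recall from the preceding discussion that an edge of the $d$-model has length $\leq \theta$ precisely when the corresponding underlying exponential variable is $\leq \theta^d$, i.e.\ when the edge of the associated $1$-model has length $\leq \theta^d$. Hence the $(k,\theta)$-neighborhood of a vertex in the $d$-model is the image, under the edgewise map $x\mapsto x^{1/d}$, of the $(k,\theta^d)$-neighborhood of the same vertex in the associated $1$-model, and likewise on the tree side the $(k,\theta)$-neighborhood of the root of a $d$-PWIT is the image of the $(k,\theta^d)$-neighborhood of the root of the underlying $1$-PWIT.

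**Key steps.** First, set up the underlying exponential variables: the edge lengths of the $d$-model on $K_n$ are $(nX_{i,j})^{1/d}$ with $X_{i,j}$ i.i.d.\ exponential of mean $1$, so the variables $nX_{i,j}$ are exactly the edge lengths of the $d=1$ model on $K_n$. Second, apply Lemma~\ref{L:couplingGeneral} to this $d=1$ model with the parameters $m$, $k$ and threshold $\theta^d$ in place of $\theta$: this produces a coupling of the $X$-process to $m$ independent (ordinary) PWITs under which, with probability at least $1 - (m\theta^d + m + 1)^k / n^{1/3}$, the union of the $(k,\theta^d)$-neighborhoods of $v_1,\dots,v_m$ in the $1$-model is isomorphic, with equal edge lengths, to a disjoint union of the $(k,\theta^d)$-neighborhoods of the roots of $m$ independent PWITs. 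Third, push this coupling forward through the edgewise map $x\mapsto x^{1/d}$: the PWITs become $d$-PWITs by definition, the $(k,\theta^d)$-neighborhoods become $(k,\theta)$-neighborhoods by the threshold computation above, the isomorphism is preserved (a graph isomorphism matching edge lengths is still one after applying the same monotone bijection to all lengths), and the failure probability bound is unchanged since it depends only on $m$, $k$ and the $1$-model threshold $\theta^d$, which gives exactly $1 - (m\theta^d + m + 1)^k / n^{1/3}$.

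**Main obstacle.** There is no serious obstacle; the content is entirely in Lemma~\ref{L:couplingGeneral}, and the remaining work is bookkeeping. The one point that deserves a careful sentence rather than a one-line assertion is the claim that the $(k,\theta)$-neighborhood in the $d$-model and the $(k,\theta^d)$-neighborhood in the associated $1$-model consist of \emph{the same} vertices and edges: this holds because the map $x\mapsto x^{1/d}$ is strictly increasing, so an edge lies below the threshold on one side iff it lies below the corresponding threshold on the other, and hence a path of at most $k$ edges each below threshold is admissible for one neighborhood iff it is admissible for the other; the two neighborhoods therefore have identical edge sets and differ only by the relabeling of lengths. With that observation in hand, the lemma follows immediately from Lemma~\ref{L:couplingGeneral}, and I would present the proof in essentially the three steps above, perhaps compressed into a single short paragraph as the excerpt's phrase ``By rescaling'' already signals.
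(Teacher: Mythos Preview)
Your proposal is correct and follows exactly the approach the paper takes: the paper states the lemma with the single phrase ``By rescaling, the generalization of Lemma~\ref{L:couplingGeneral} becomes'' and gives no further proof, relying precisely on the observation (stated just before the lemma) that the $(k,\theta)$-neighborhood in the $d$-model corresponds to the $(k,\theta^d)$-neighborhood in the underlying $d=1$ model. Your three steps simply make this rescaling argument explicit.
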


\subsection{Graph Exploration on the $d$-PWIT} \label{S:dPWIT}
Here we assume that $d\geq 1$, although some of the results hold also for $0<d<1$. In view of the results of Section \ref{S:PWITlike} it makes sense to study Graph Exploration played on the $d$-PWIT. If $v$ is a vertex of the $d$-PWIT we let $T_\theta(v)$ be the subgraph that can be reached from $v$ by downward paths consisting of edges of length at most $\theta$. The subgraph $T_\theta(root)$ is called the \emph{$\theta$-cluster}, and clearly nothing outside the $\theta$-cluster is relevant for the game. Notice that the underlying graph of the $\theta$-cluster is a Galton-Watson tree with Poisson$(\theta^d)$-distributed offspring.

Our main objective is to show that although a priori the game does not need to terminate, there is almost surely a unique sensible way of assigning to it a game-theoretical value. The precise statement is Proposition~\ref{P:uniquenessOfValuation} below, and this is the key to the proof of Theorem~\ref{T:main}. Proposition~\ref{P:uniquenessOfValuation} shows that as $\theta\to\infty$, a certain form of symmetry-breaking does not occur. This appears to be the fundamental reason why the replica and cavity methods are correct for the matching problem.

When we speak of the \emph{value} of a vertex $v$, by convention we mean the value of having moved to $v$, in other words the value of playing second if the game was played on $T_\theta(v)$ starting from $v$. If such a value $f(v)$ can be defined consistently, it must clearly satisfy
\begin{equation} \label{defValuation}
f(v) = \min(\theta/2, l_i - f(v_i)),
\end{equation}
where $l_i$ is the length of the edge to the $i$:th child $v_i$ of $v$, and the minimum is taken over $\theta/2$ and the sequence of $l_i-f(v_i)$ as $v_i$ ranges over all children. 

For a given realization of the $\theta$-cluster, we say that a function $f$ from its vertices to the real numbers is a \emph{valuation} if it satisfies \eqref{defValuation}. A valuation can be regarded as a consistent way for a player to assess the positions of the game. We observe the following: \begin{itemize} \item A valuation must satisfy $-\theta/2 \leq f(v) \leq \theta/2$ for every $v$. \item If $v$ is a leaf of the $\theta$-cluster, then $f(v) = \theta/2$. \item If the $\theta$-cluster is finite, there is a unique valuation. \end{itemize} 
 
\begin{Prop} \label{P:existenceOfValuation}
There is almost surely a valuation.
\end{Prop}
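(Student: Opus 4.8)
The plan is to construct a valuation explicitly by a monotone truncation/approximation scheme, exploiting the fact that the $\theta$-cluster is a (subcritical-or-not, but in any case almost surely everywhere-finite-depth in a useful truncated sense) Galton--Watson tree with Poisson$(\theta^d)$ offspring. First I would fix a realization of the $\theta$-cluster and, for each integer $N\geq 0$, define an auxiliary function $f_N$ on the vertices as follows: on the subtree consisting of all vertices at depth $\leq N$ from the root, impose the boundary condition $f_N(v) = \theta/2$ for every vertex at depth exactly $N$ (pretending it is a leaf), and then propagate upward using the recursion \eqref{defValuation}. Since the truncated subtree is finite, each $f_N$ is well-defined by the third bulleted observation (uniqueness of the valuation on a finite cluster). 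All the $f_N$ take values in $[-\theta/2,\theta/2]$.

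The key step is to show that $f_N(v)$ converges as $N\to\infty$ for every fixed vertex $v$. For this I would establish a monotonicity: the map $f\mapsto \min(\theta/2,\,l_i - f(v_i))$ is monotone in a suitable sense (increasing one child's value can only decrease the parent's value, and vice versa), so that comparing the boundary condition $f_N \equiv \theta/2$ at depth $N$ with the value $f_{N+2}$ already computed at depth $N$ (which is $\leq \theta/2$) should force $f_N \geq f_{N+2}$ on even levels and $f_{N+1}\geq f_{N+3}$ on odd levels, or more robustly that $f_{N}$ and $f_{N+2}$ sandwich in alternating fashion. The cleanest route is probably to run two parallel iterations: $g_N$ built from the boundary value $+\theta/2$ at depth $N$, and $h_N$ built from the boundary value $-\theta/2$ at depth $N$. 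By monotonicity of the recursion under an even number of steps, $g_{2m}(v)$ is nonincreasing in $m$ and $h_{2m}(v)$ is nondecreasing in $m$ (for $v$ at a fixed depth of fixed parity), both sequences are bounded, hence convergent, and any valuation $f$ is squeezed: $h_{2m}(v) \leq f(v) \leq g_{2m}(v)$. This already shows a valuation is essentially pinned down by the limits, but for mere existence it suffices that $\lim_m g_{2m}(v)$ itself satisfies \eqref{defValuation}: pass to the limit in the recursion, using that the minimum is over the sequence $l_i - g_{2m}(v_i)$ where only the children of $v$ intervene, and $v$ has only finitely many children in the $\theta$-cluster (Poisson$(\theta^d)$ many), so the minimum passes to the limit by continuity. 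Defining $f(v) := \lim_m g_{2m}(v)$ for all $v$ of even depth and $f(v):=\lim_m g_{2m+1}(v)$ for odd depth gives a function satisfying \eqref{defValuation} everywhere.

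The main obstacle I anticipate is not the algebra of passing to the limit but verifying the monotonicity of the $N$-step recursion cleanly and making sure the limit function is finite (i.e. that no vertex gets value $\pm\infty$) — this is automatic since every $f_N$ lies in $[-\theta/2,\theta/2]$, so the bound is inherited. A secondary point is that we only need existence almost surely, so it is harmless if the $\theta$-cluster is infinite; the truncation argument never requires finiteness of the whole cluster, only that each vertex has finitely many children and is at finite depth, both of which hold on the event that the $\theta$-cluster is a locally finite rooted tree — an event of probability $1$. Note that this argument does not give uniqueness; that is the content of the later Proposition~\ref{P:uniquenessOfValuation}, and the fact that $g_{2m}$ and $h_{2m}$ need not have the same limit is exactly the possible "symmetry breaking" discussed there. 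For Proposition~\ref{P:existenceOfValuation} we are content with the single valuation produced above.
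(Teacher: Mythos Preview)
Your proposal is correct and follows essentially the same approach as the paper: the paper constructs the valuation $f_B$ as the pointwise limit of partial valuations $f^k_B$ obtained by imposing boundary values at depth $k$ (the paper uses alternating values $\pm\theta/2$ depending on the parity of $k$, which makes the whole sequence monotone in $k$ rather than just along the even subsequence, but this is a cosmetic difference) and then passes to the limit using monotonicity and boundedness in $[-\theta/2,\theta/2]$. Your discussion of why the limit satisfies \eqref{defValuation}---namely, that each vertex almost surely has finitely many children so the minimum commutes with the limit---is exactly the verification the paper leaves as ``easily verified.''
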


The only reason we say ``almost surely'' is that we haven't excluded the possibility that a vertex may have infinitely many children in the $\theta$-cluster. If this was the case, we would have to replace minimum by infimum in \eqref{defValuation}, but this is an event of zero probability.

\begin{proof}
Consider a ``partial valuation'' $f^k_B$ obtained by assigning values in favor of Bob to the vertices at distance $k$ from the root. More precisely, these vertices get value $\theta/2$ if $k$ is even and $-\theta/2$ if $k$ is odd. Values are then propagated towards the root according to \eqref{defValuation}. As $k$ increases, the values $f^k_B(v)$ form a monotone sequence at each vertex $v$ (decreasing at even levels, increasing at odd levels). Therefore there is a pointwise limit $$f_B(v) = \lim_{k\to\infty} f^k_B(v),$$ and it is easily verified that $f_B$ is a valuation.
\end{proof}

Clearly $f_B$ is at least as favorable to Bob as any other valuation. We can order the valuations from Bob's point of view by saying that $f_1 \leq f_2$ if whenever $v$ is at even distance from the root, $f_1(v) \leq f_2(v)$, and whenever $v$ is at odd distance from the root, $f_1(v) \geq f_2(v)$. Under this ordering the set of valuations forms a lattice where $f_B$ is the maximal element, and similarly there is a minimal element $f_A$ which is most favorable from Alice's point of view.

We are aiming to show that almost surely $f_A = f_B$. This holds trivially in the range $\theta\leq 1$, since the $\theta$-cluster is almost surely finite. For $\theta>1$, the $\theta$-cluster is infinite with positive probability, and the scenario that we wish to exclude is that at some critical value of $\theta$ there occurs a breaking of symmetry after which $f_A$ is distinct from $f_B$.

The question whether $f_A = f_B$ is in a curious way similar to questions of the efficiency of game-tree search in games of perfect information such as chess. Uniqueness of valuation means that a game-tree search will be effective, while symmetry-breaking corresponds to a situation where important long-term features of a position stay invisible to any fixed-depth search.

\subsection{The branching of near-optimal play} \label{S:branching}
For the moment we take $f_B$ as our default valuation. This defines a strategy in an obvious way: From a vertex $v$, terminate if $f_B(v)=\theta/2$, and otherwise move to the child $v_i$ for which $f_B(v) = l_i - f(v_i)$. There seems to be the possibility of a tie in which several move options would be consistent with $f_B$, but $f_B$ has the property that $f_B(v_i)$ depends only on $T_\theta(v_i)$. Therefore $l_i - f_B(v_i)$ has continuous distribution and is independent of $l_j - f_B(v_j)$ for $i\neq j$. It follows that the probability of a tie between move options is zero.

Let $\delta>0$. We say that a move from $v$ to $v_i$ is \emph{optimal} if $l_i - f_B(v_i) = f_B(v)$, and \emph{$\delta$-reasonable} if $l_i-f_B(v_i) \leq f_B(v)+\delta$. Let $R$ be the subtree of the $\theta$-cluster formed by all paths from the root consisting of $\delta$-reasonable moves by Alice and optimal moves by Bob (a move can be $\delta$-reasonable even if $l_i>\theta$ so some $\delta$-reasonable moves are excluded, but this is not important). Let $R(k)$ be the set of vertices of $R$ at distance $k$ from the root.

\begin{Prop} \label{P:finiteR}
If $\delta$ is sufficiently small, then $R$ is almost surely finite.
\end{Prop}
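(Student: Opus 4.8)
The plan is to realise $R$ as a subcritical branching process. The key structural observation is that Bob is constrained: at an odd-level vertex, where it is Bob's turn, there is almost surely at most one optimal move, and there is \emph{none} precisely when $f_B=\theta/2$ there, i.e.\ when Bob's optimal reply is to terminate. Hence every odd-level vertex of $R$ has at most one child in $R$, so all branching takes place at the even levels, and $R$ is finite if and only if the tree $R^A$ obtained by contracting each odd-level vertex of $R$ onto its (at most one) child in $R$ is finite. I would set up $R^A$ as a multitype branching process whose individuals are the even-level vertices of $R$, the type of a vertex $w$ being its valuation $f_B(w)\in(-\theta/2,\theta/2]$ and the offspring of $w$ being the vertices of $R$ two levels below it. Using that $T_\theta(w)$ is an independent copy of the $\theta$-cluster and that a valuation restricted to a subtree is a function of that subtree alone, one argues that conditionally on $f_B(w)=x$ the offspring set of $w$ has an explicit law depending only on $x$, independent of the way $w$ was reached. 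It then suffices to bound, uniformly in $x$, the expected number of offspring of a type-$x$ individual by some $\rho<1$ once $\delta$ is small; then the $k$-th generation $R^A(k)$ satisfies $E(|R^A(k)|)\le\rho^k$, so by Markov's inequality $R^A$, hence $R$, is almost surely finite, and in fact $E(|R|)<\infty$.

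For $\delta=0$ the vertex $w$ has a unique reasonable child, namely its optimal child $u$, and $u$ yields an offspring of $w$ in $R^A$ exactly when $f_B(u)<\theta/2$. Write $\mu$ for the law of the valuation at the root of a $\theta$-cluster, put $p:=\mu(\{\theta/2\})$, and let $\lambda_\mu$ be the intensity of the point process $\{l_i-f_B(v_i)\}$ over the children $v_i$ of $w$, so that $\lambda_\mu(x)=d\int(x+z)^{d-1}\mathbf{1}\{0<x+z\le\theta\}\dd\mu(z)$. A Palm-type computation for the minimum of this process gives
\[
P\bigl(f_B(u)=\theta/2\mid f_B(w)=x\bigr)=\frac{d(x+\theta/2)^{d-1}\,p}{\lambda_\mu(x)}.
\]
Since $d\ge1$, the map $t\mapsto t^{d-1}$ is nondecreasing on $(0,\infty)$, whence $\lambda_\mu(x)\le d(x+\theta/2)^{d-1}\int\dd\mu(z)=d(x+\theta/2)^{d-1}$, and the probability above is therefore at least $p$. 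As a $\theta$-cluster whose root has no children already has valuation $\theta/2$, we have $p\ge e^{-\theta^d}>0$, so for $\delta=0$ the expected number of offspring from every type is at most $1-p<1$.

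For $\delta>0$ the reasonable children of $w$ other than the optimal one are the points of $\{l_i-f_B(v_i)\}$ lying in a window of length $\delta$ immediately above the minimum of that process; conditionally they form a Poisson process of mean at most $\delta\sup\lambda_\mu$, and each of them contributes at most one further offspring of $w$. Using $d\ge1$ once more to estimate $(x+z)^{d-1}\le\theta^{d-1}$ on $0<x+z\le\theta$ gives $\sup\lambda_\mu\le d\theta^{d-1}$, so for every type
\[
\text{expected number of offspring}\ \le\ (1-p)+d\theta^{d-1}\delta,
\]
which is strictly less than $1$ as soon as $\delta<p/(d\theta^{d-1})$. This gives the proposition.

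The step I expect to be the main obstacle is the rigorous construction of $R^A$ as an honest branching process: one has to show that, conditionally on everything revealed when $R^A(0),\dots,R^A(k)$ and the valuations of these vertices have been exposed, the subtree hanging below a freshly exposed even-level vertex $w$ is distributed as a $\theta$-cluster conditioned only on the value $f_B(w)$. The natural route is a nested exploration in which edge lengths and children's valuations are revealed only as needed to decide reasonableness and optimality, exploiting that each valuation is a deterministic function of the edges at that vertex together with its children's valuations, so that no information about deeper subtrees is leaked, together with the independence of disjoint subtrees of the $d$-PWIT. One also has to treat the atom of $\mu$ at $\theta/2$ correctly in the Palm computation, and to note that discarding reasonable moves along edges of length more than $\theta$ only removes potential offspring, and so never hurts the estimate.
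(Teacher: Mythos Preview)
Your proof is correct and follows essentially the same route as the paper: the paper tracks a weighted count $H(k)=\#\{v\in R(k):f_B(v)<\theta/2\}+\tfrac12\,\#\{v\in R(k):f_B(v)=\theta/2\}$ level by level and shows $EH(k+2)/EH(k)\le 1-\tfrac12 e^{-\theta^d}+o(1)$, while you contract out the odd levels and bound the mean offspring of the resulting even-level process by $(1-p)+d\theta^{d-1}\delta$ using the identical key estimate (Alice's optimal child has $f_B=\theta/2$ with probability at least $p\ge e^{-\theta^d}$, via monotonicity of $l\mapsto l^{d-1}$ for $d\ge1$). The conditioning obstacle you flag is exactly the one-line fact the paper invokes: the event $v\in R(k)$ depends on $T_\theta(v)$ only through $f_B(v)$, so conditional on $v\in R(k)$ and on $f_B(v)$ the subtree $T_\theta(v)$ is distributed as if one conditioned on $f_B(v)$ alone.
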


We let \begin{equation} \label{Hdef} H(k) = \#\left\{\text{$v \in R(k)$ : $f_B(v)<\theta/2$}\right\} + \frac12\cdot  \#\left\{\text{$v \in R(k)$ : $f_B(v)=\theta/2$}\right\} .\end{equation}
The proof of Proposition~\ref{P:finiteR} consists in showing that for sufficiently small $\delta$, $EH(k) \to 0$ as $k\to\infty$.

The event $v\in R(k)$ does not depend on $T_\theta(v)$ through anything else than $f_B(v)$. It follows that if we condition on $v\in R(k)$ and on $f_B(v)$, the structure of $T_\theta(v)$ is distributed as if we condition on $f_B(v)$ only. Therefore we first assume that $v$ is an ``arbitrary'' vertex of the $\theta$-cluster in the sense that $T_\theta(v)$ is itself equal in distribution to $T_\theta(root)$. The children of $v$ in the $\theta$-cluster are denoted by $v_i$.

\begin{Lemma} \label{L:Poisson}
The points $(l_i, f_B(v_i))$ constitute a two-dimensional inhomogeneous Poisson point process on the square $[0, \theta]\times [-\theta/2, \theta/2]$. \end{Lemma}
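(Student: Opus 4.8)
### Proof plan for Lemma~\ref{L:Poisson}

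\textbf{The plan is to} peel off the root and identify the structure as a marking of the offspring Poisson process. First I would recall that, by the construction of the $d$-PWIT, the edge-lengths from $v$ to its children form, after discarding edges of length $>\theta$, a rate-$d\,l^{d-1}$ inhomogeneous Poisson point process $\{l_i\}$ on the interval $(0,\theta]$ (equivalently, the image under $x\mapsto x^{1/d}$ of a rate-$1$ process restricted to $(0,\theta^d]$). This is exactly the offspring process of the Galton--Watson tree underlying the $\theta$-cluster; it has finite intensity $\theta^d$, so there are finitely many children and the process is well-defined as a genuine point process.

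\textbf{Next I would} observe that, conditionally on the lengths $\{l_i\}$, the subtrees $T_\theta(v_i)$ hanging from the children are i.i.d.\ copies of $T_\theta(\mathrm{root})$, mutually independent and independent of $\{l_i\}$, because the $d$-PWIT is built from independent Poisson processes at each vertex. Since $f_B(v_i)$ is a deterministic (measurable) functional of $T_\theta(v_i)$ alone --- it is the pointwise limit of the finite-depth approximations $f^k_B$ restricted to that subtree --- the values $f_B(v_i)$ are i.i.d.\ draws from a fixed law $\mu$ on $[-\theta/2,\theta/2]$, independent of one another and of $\{l_i\}$. Attaching to each point $l_i$ of the offspring process the independent mark $f_B(v_i)$ is precisely the operation of independently marking a Poisson process, so by the Marking Theorem the collection $\{(l_i,f_B(v_i))\}$ is a Poisson point process on $(0,\theta]\times[-\theta/2,\theta/2]$ with intensity measure $d\,l^{d-1}\dd l\times\mu(\dd y)$, hence inhomogeneous in general. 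One should note that $\mu$ may have an atom at $\theta/2$ (the event that $T_\theta(v_i)$ is a leaf, or more generally that the valuation pins to $\theta/2$), so strictly the ``square'' carries a product of a continuous factor and a possibly-atomic factor; this does not affect the Poisson property. Finally I would invoke the remark preceding Section~\ref{S:branching}: conditioning on $v\in R(k)$ and on $f_B(v)$ does not alter the law of $T_\theta(v)$ beyond the value $f_B(v)$, so the same conclusion holds for the vertices $v$ actually relevant to $H(k)$, which is how the lemma will be used.

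\textbf{The main obstacle} is not the Poisson/marking machinery, which is routine, but making rigorous the claim that $f_B(v_i)$ is a well-defined measurable function of $T_\theta(v_i)$ with a non-degenerate distribution --- in particular that the monotone limit defining $f_B$ exists simultaneously and measurably across the random subtree, and that the resulting mark is genuinely independent across children rather than merely conditionally so. This is handled by the self-similarity of the $d$-PWIT together with the monotone-limit construction in the proof of Proposition~\ref{P:existenceOfValuation}, but it is the point where care is needed. A secondary subtlety is the finiteness of the offspring process: since the intensity $\theta^d$ is finite there are a.s.\ finitely many children, so ``$\min$'' in \eqref{defValuation} is legitimate and the marked process is locally finite, consistent with the a.s.\ caveat already flagged after Proposition~\ref{P:existenceOfValuation}.
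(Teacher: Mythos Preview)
Your proposal is correct and follows essentially the same approach as the paper: the $l_i$ form a Poisson point process, and since $f_B(v_i)$ depends only on the subtree $T_\theta(v_i)$, the marks are i.i.d.\ and independent of the $l_i$, so independent marking yields a Poisson process on the square. The paper's proof is a terse two-sentence version of exactly this argument; your elaboration on the Marking Theorem, the intensity $d\,l^{d-1}\dd l\times\mu(\dd y)$, and the atom at $\theta/2$ is accurate and consistent with the paper's subsequent remarks about $\mu_v$.
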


\begin{proof} The sequence of edge lengths $l_i$ is a Poisson point process. Since $f_B(v_i)$ depends only on $T_\theta(v_i)$, the $f_B(v_i)$'s are independent of each other and of the $l_i$'s. \end{proof}

By the \emph{$l$-$f$-square} we mean the square $[0, \theta]\times [-\theta/2, \theta/2]$. We let $\mu_v$ be the measure on the $l$-$f$-square associated with the Poisson process of pairs $(l_i, f_B(v_i))$.
The measure is degenerate on the line $f = \theta/2$ in the sense that this line has positive measure. Also notice that we do not assume that these point processes are equal in distribution for all $v$ (this is what we are about to prove). From what we have established so far it is conceivable that $\mu_v$ depends on whether $v$ is at even or odd distance from the root.

To bound $EH(k+1)$ in terms of $EH(k)$ we bound the expected number of moves in $R$ from a vertex $v\in R(k)$ in four cases, depending on whether Alice or Bob is about to move and conditioning either on $f_B(v) < \theta/2$ or on $f_B(v) = \theta/2$. The calculations rely crucially on the fact that for $d\geq 1$ the density of $l$ is increasing, and that therefore the measure $\mu_v$ of a subset of the $l$-$f$-square increases under translation to the right. 

We first consider the case that Alice is about to move from a vertex $v\in R(k)$, where thus $k$ is even. Suppose first that $f_B(v)<\theta/2$. Alice's optimal move is given by a point $(l_i, f_B(v_i))$ above the diagonal $l-f = \theta/2$ in the $l$-$f$-square. If we condition on $f_B(v) \in [a,b]$ for some $a, b$ such that $-\theta/2 \leq a\leq b < \theta/2$, then \begin{multline} P(f_B(v_i) = \theta/2) = \frac{\mu_v(f=\theta/2\,\&\, a+\theta/2 \leq l \leq b+\theta/2)}{\mu_v(l-f\in [a, b])} \\ \geq \frac{\mu_v(f=\theta/2\,\&\, a+\theta/2 \leq l \leq b+\theta/2)}{\mu_v(a+\theta/2 \leq l \leq b+\theta/2)} = \mu_v(f=\theta/2) \geq \exp(-\theta^d). \end{multline} It follows that the probability that $f_B(v_i)=\theta/2$ conditioning on $v\in R(k)$ and $f_B(v)<\theta/2$ is at least $\exp(-\theta^d)$, and that therefore the optimal move by Alice contributes to $EH(k+1)$ by at most $1-1/2\cdot\exp(-\theta^d)$.

The expected number of non-optimal $\delta$-reasonable moves is at most $\delta\cdot d\theta^{d-1} = o(1)$ as $\delta\to 0$. Hence the expected contribution to $H(k+1)$ when Alice moves from a vertex $v$ such that $f_B(v)<\theta/2$ is at most $$1-\frac12\exp(-\theta^d) + o(1).$$
By $o(1)$ we mean a term that can be made as small as we please by making $\delta$ small.

Consider now the case that Alice moves from a vertex $v\in R(k)$ with $f_B(v) = \theta/2$. Then there is no optimal move (the optimal decision is to terminate), and again the expected number of $\delta$-reasonable moves is at most $\delta\cdot d\theta^{d-1} = o(1)$. It follows that $$\frac{EH(k+1)}{EH(k)} \leq \max\left(1-\frac12\exp(-\theta^d) + o(1), \frac{o(1)}{1/2}\right) \leq 1-\frac12\exp(-\theta^d) + o(1).$$
When Bob moves, there is no optimal move if $f_B(v)=\theta/2$ and at most one if $f_B(v)<\theta/2$. Hence the growth factor for $H(k)$ over a pair of moves, one by Alice and one by Bob, satisfies $$\frac{EH(k+2)}{EH(k)} \leq 1-\frac12\exp(-\theta^d) + o(1) < 1,$$ uniformly in $k$ if $\delta$ is sufficiently small. It follows that $EH(k)\to 0$ as $k\to \infty$ and this completes the proof of Proposition~\ref{P:finiteR}.

The upper bounds on the expected contributions to $H(k+1)$ when moving from a vertex $v\in R(k)$ are summarized in the following table:

\medskip 

\begin{center}
  \begin{tabular}{@{} ccc @{}}
    \hline
    Player to move  & Vertex & Contribution to $EH(k+1)$\\ 
    \hline
    Alice moves & $f_B(v) < \theta/2$ & $1-1/2\cdot \exp(-\theta^d)+o(1)$\\ 
     & $f_B(v) = \theta/2$ & $o(1)$ \\ 
     \hline
    Bob moves & $f_B(v) < \theta/2$ & $1$ \\ 
     &  $f_B(v) = \theta/2$ & $0$\\ 
    \hline
  \end{tabular}
\end{center}

\medskip

\begin{Lemma} \label{L:termination}
For sufficiently small $\delta$, there is almost surely no infinite path starting anywhere in the $\theta$-cluster and consisting of optimal moves by Bob and $\delta$-reasonable moves by Alice. \end{Lemma}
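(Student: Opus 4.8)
The plan is to upgrade Proposition~\ref{P:finiteR}, which controls the subtree $R$ grown from the root, to the same statement for the subtree grown from an \emph{arbitrary} vertex, and then take a countable union over all vertices. Observe first that an optimal move by Bob, and likewise a $\delta$-reasonable move by Alice, always passes from a vertex to one of its children, so any path made of such moves is directed away from the root. For a vertex $v$ of the $d$-PWIT, let $R^{(v)}$ be the subtree grown from $v$ in exact analogy with $R$ (Alice designated to move first from $v$), let $R^{(v)}(k)$ be its vertices at distance $k$ from $v$, and let $H^{(v)}(k)$ be the associated weighted count, defined as in \eqref{Hdef}.

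The key point is that every estimate in the proof of Proposition~\ref{P:finiteR} --- in particular the table of expected contributions to $EH(k+1)$ from a vertex of $R(k)$ --- is purely \emph{local}. It uses only that the children of a vertex inside the $\theta$-cluster arise from a rate-$1$ Poisson process of edge lengths restricted to $[0,\theta]$ (so the offspring count is Poisson with mean $\theta^d$, and a child is a leaf, hence receives $f_B$-value $\theta/2$, with probability $\exp(-\theta^d)$), that $f_B(v_i)$ depends only on $T_\theta(v_i)$, and that the density of $l$ is nondecreasing because $d\geq 1$. None of this refers to the root or to the parity of a vertex's distance from it; indeed the proof of Proposition~\ref{P:finiteR} was deliberately written without assuming that $\mu_v$ is the same for all $v$. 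Hence the same two-step recursion
\[
EH^{(v)}(k+2) \leq \bigl(1-\tfrac12\exp(-\theta^d)+o(1)\bigr)\,EH^{(v)}(k)
\]
holds for every $v$, with the $o(1)$ uniform, so for $\delta$ small enough $EH^{(v)}(k)\to 0$. As $EH^{(v)}(k)$ dominates a fixed positive multiple of $P\bigl(R^{(v)}(k)\neq\emptyset\bigr)$ and these events are decreasing in $k$, it follows that $P\bigl(R^{(v)}\text{ is infinite}\bigr)=0$. (One could also invoke Proposition~\ref{P:finiteR} itself, since $T_\theta(v)$ carrying the restriction of $f_B$ is distributed as $T_\theta(root)$ with its valuation, up to the interchange of the extremal valuations $f_A$ and $f_B$ when $v$ lies at odd distance --- a swap to which the proof applies verbatim.)

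Now take a countable union. The $d$-PWIT has only countably many vertices, so the $\theta$-cluster does as well, and therefore almost surely $R^{(v)}$ is finite for \emph{every} vertex $v$ of the $\theta$-cluster at once. If some infinite path starting at a vertex $u$ of the $\theta$-cluster consisted of optimal moves by Bob and $\delta$-reasonable moves by Alice, then --- deleting at most its first edge so that Alice moves first --- it would yield an infinite path inside $R^{(v)}$ with $v$ equal to $u$ or to the second vertex of the path, contradicting the finiteness of $R^{(v)}$. This is the lemma.

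The one step that is genuinely not bookkeeping is the locality claim: one must check that no estimate in the proof of Proposition~\ref{P:finiteR} secretly used the root or the parity of a vertex's depth, so that the subtrees $R^{(v)}$ really all contract at the uniform rate $1-\tfrac12\exp(-\theta^d)+o(1)$ per pair of levels. Once that is granted --- and the earlier proof was arranged to provide it --- the lemma is immediate from the countable union.
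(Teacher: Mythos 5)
Your proof is correct and takes essentially the same approach as the paper's, which reduces to the finiteness of $R$ (Proposition~\ref{P:finiteR}) via the self-similarity of the $d$-PWIT and a countable union over starting vertices. The paper's own proof is only two sentences and leaves these steps implicit; what you have done is spell out the locality of the estimates and the parity bookkeeping that justify the paper's terse ``it follows.''
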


\begin{proof}
If such a path started from the root, it would be a subset of $R$, and $R$ is almost surely finite. This event therefore has probability zero, and it follows that the probability of such a path anywhere in the $\theta$-cluster is also zero. \end{proof}

\begin{Prop} \label{P:uniquenessOfValuation}
There is almost surely only one valuation.
\end{Prop}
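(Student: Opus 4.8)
The plan is to prove that the minimal and maximal valuations $f_A$ and $f_B$ agree almost surely; since every valuation is sandwiched between them in the lattice ordering, this gives uniqueness. So suppose not: with positive probability there is a valuation $f$ (take $f=f_A$) with $f\neq f_B$. For each vertex $v$ set $\Delta(v):=f_B(v)-f(v)$ when $v$ is at even distance from the root and $\Delta(v):=f(v)-f_B(v)$ when $v$ is at odd distance. By the definition of the ordering, $\Delta\geq 0$ everywhere, and on our event $\Delta(v_0)>0$ for some vertex $v_0$.

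First I would show that, starting from $v_0$, one can trace an infinite path of moves that keeps $\Delta$ positive, with Bob always moving optimally and Alice (eventually) always moving $\delta$-reasonably, both with respect to $f_B$. Build the path greedily. At an even-distance vertex $v$ with $\Delta(v)>0$ we have $f(v)<f_B(v)\leq\theta/2$, so there is a child $v_j$ with $f(v)=l_j-f(v_j)$ (Alice's $f$-optimal move); go there. At an odd-distance vertex $w$ with $\Delta(w)>0$ we have $f_B(w)<\theta/2$, so there is a child $w_j$ with $f_B(w)=l_j-f_B(w_j)$ (Bob's $f_B$-optimal move); go there. In each case a one-line computation using only that $f$ and $f_B$ both satisfy \eqref{defValuation} gives $\Delta(\text{child})\geq\Delta(\text{parent})$; e.g.\ for the even step, $\Delta(v_j)=(l_j-f(v))-f_B(v_j)\geq (l_j-f(v))-(l_j-f_B(v))=\Delta(v)$, using $f_B(v)\leq l_j-f_B(v_j)$. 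Hence along the path $\Delta$ is nondecreasing, strictly positive, and bounded above by $\theta$ (since $|f|,|f_B|\leq\theta/2$); therefore it converges, and its increments tend to $0$.

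Next comes the quantitative point. The even-step move $v\to v_j$ above is $\delta$-reasonable with respect to $f_B$ iff $l_j-f_B(v_j)\leq f_B(v)+\delta$; and since $l_j-f_B(v_j)=f(v)+\Delta(v_j)$, this is exactly $\Delta(v_j)-\Delta(v)\leq\delta$, i.e.\ the discrepancy increment at that step is at most $\delta$. Because the increments tend to $0$, at most finitely many of Alice's moves along the path fail to be $\delta$-reasonable. Discarding the path up to the last such vertex leaves an infinite path, based at some vertex of the $\theta$-cluster, consisting entirely of optimal moves by Bob and $\delta$-reasonable moves by Alice. Fixing $\delta$ small enough, this contradicts Lemma~\ref{L:termination}; and since the construction works for every $\delta>0$, there is no obstruction to taking $\delta$ that small. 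Hence $P(f_A\neq f_B)=0$, which proves the proposition.

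The place where the argument could go wrong, and the reason the monotonicity of $\Delta$ is doing the real work, is that Alice's $f$-optimal move is not automatically $\delta$-reasonable for $f_B$: the valuation $f_B$ may assess the chosen child far more pessimistically than $f$ does, which is precisely a large jump in $\Delta$. The resolution is simply that a bounded nondecreasing sequence has increments tending to zero, so such jumps occur only finitely often along the path. Beyond this the only care needed is bookkeeping: that the greedy moves exist (using that a vertex has finitely many children in the $\theta$-cluster almost surely, so the relevant minima are attained), that $\Delta\geq 0$ and the orientations of the inequalities match the definition of the ordering, and that the degenerate case $f_B(v)=\theta/2$ at an even vertex with $\Delta(v)>0$ is handled by the same computation.
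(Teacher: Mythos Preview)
Your argument is correct and follows essentially the same route as the paper. Both proofs construct the same infinite path---Alice following $f_A$, Bob following $f_B$---and both exploit a bounded cumulative quantity (your $\Delta$, the paper's ``total of Alice's mistakes from Bob's point of view'') to conclude that the per-step error is eventually below $\delta$, contradicting Lemma~\ref{L:termination}. Your discrepancy function $\Delta$ makes the monotonicity and the identification of the increment with $l_j-f_B(v_j)-f_B(v)$ explicit, whereas the paper phrases the same bound game-theoretically; the content is the same.
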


\begin{proof}
It suffices to show that almost surely $f_A(root) = f_B(root)$. Suppose therefore that this is not the case. Now let both Alice and Bob play ``optimistically'' in the sense that Alice plays according to $f_A$ and Bob plays according to $f_B$. Obviously they can never agree on an outcome of the game, so play has to continue forever. From Bob's perspective, it will seem that Alice sometimes makes mistakes that improve Bob's position. On the other hand the total gain (from Bob's perspective) of all these mistakes cannot be more than $\theta$, because the moment it adds up to more, Bob can terminate the game and receive a payoff greater than $f_B(root)$, and thereby also greater than $f_A(root)$, which is a contradiction. Therefore the game must eventually reach a point where Alice's all future mistakes relative to $f_B$ add up to at most $\delta$. The play from that point on will contradict Lemma~\ref{L:termination}.
\end{proof}

We need no longer distinguish between $f_A$ and $f_B$, and we denote the almost surely unique valuation by $f$. 
Now recall the partial valuations $f^k_B$, and define $f^k_A$ similarly by choosing the values at level $k$ in favor of Alice. Notice that $f^k_B$ and $f^k_A$ are the upper and lower bounds on $f$ that we get by looking $k$ moves ahead from the root.

\begin{Prop} \label{EdiffToZero}
$E\left(f^k_B(root) - f^k_A(root)\right)\to 0$ as $k\to\infty$.
\end{Prop}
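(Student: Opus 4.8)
The plan is to deduce Proposition~\ref{EdiffToZero} from the already-established Proposition~\ref{P:uniquenessOfValuation} together with a uniform integrability / domination argument. We know almost surely that $f_A(root) = f_B(root)$; call this common value $f(root)$. Recall that $f^k_B(root)$ is a monotone (decreasing in $k$ through even levels and increasing through odd levels, but in any case monotone as a one-sided sequence once one fixes the parity of $k$) sequence that decreases to $f_B(root)$, and $f^k_A(root)$ increases to $f_A(root)$; more precisely, from the lattice structure described after Proposition~\ref{P:existenceOfValuation} we have $f^k_A(root) \le f(root) \le f^k_B(root)$ for all $k$, with $f^k_B(root) \downarrow f_B(root)$ and $f^k_A(root) \uparrow f_A(root)$ as $k \to \infty$. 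Hence the nonnegative random variable $D_k := f^k_B(root) - f^k_A(root)$ decreases monotonically to $f_B(root) - f_A(root)$, which is $0$ almost surely by Proposition~\ref{P:uniquenessOfValuation}.

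First I would note the deterministic bound $0 \le D_k \le \theta$ for every $k$: indeed every valuation, and every partial valuation $f^k_B$ or $f^k_A$, takes values in $[-\theta/2,\theta/2]$ (this is the first bulleted observation in Section~\ref{S:dPWIT}, and the same monotone-propagation argument shows the partial valuations obey the same bound since the recursion \eqref{defValuation} maps $[-\theta/2,\theta/2]$ into itself). So $D_k$ is a bounded sequence of random variables decreasing pointwise to $0$. By dominated convergence (or equivalently bounded convergence, with dominating constant $\theta$), $E(D_k) \to 0$, which is exactly the assertion.

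The one point that needs care — and which I expect to be the only real content beyond invoking the previous results — is justifying that $D_k$ is genuinely monotone decreasing in $k$, or at least that $\limsup_k E(D_k) \le E(\lim_k D_k) = 0$; the cleanest route is monotonicity. This follows from the standard game-tree / backward-induction fact that looking one move further ahead can only narrow the interval of possible continuation values: replacing the pro-Bob terminal assignment $\theta/2$ (resp.\ pro-Alice assignment $-\theta/2$) at level $k$ by the true constraint that the level-$k$ vertices have further subtrees, and then propagating \eqref{defValuation}, moves $f^k_B$ weakly toward $f_A$ and $f^k_A$ weakly toward $f_B$, so $f^{k+1}_B(root) \le f^k_B(root)$ and $f^{k+1}_A(root) \ge f^k_A(root)$ in the Bob-ordering at the root. (One checks this by induction on the tree: the map sending a tuple of children-values to the parent value via \eqref{defValuation} is monotone in the appropriate sense, alternating with parity, and $f^{k+1}_B$ restricted to level $k$ equals $f^1_B$ applied there, which is $\le \theta/2 = f^k_B$ on that level when $k$ even, and $\ge -\theta/2$ when $k$ odd.) Granting this, $D_k$ is a decreasing sequence in $[0,\theta]$ converging a.s.\ to $0$, and bounded convergence finishes the proof.

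Thus the main obstacle is not analytic but bookkeeping: getting the parity conventions in the lattice ordering straight so that "more lookahead" provably shrinks $D_k$; once that monotonicity is in hand, the conclusion is immediate from Proposition~\ref{P:uniquenessOfValuation} and the uniform bound by $\theta$.
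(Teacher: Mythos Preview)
Your proposal is correct and follows essentially the same route as the paper: use Proposition~\ref{P:uniquenessOfValuation} to get $D_k\to 0$ a.s., note the monotonicity of $D_k$ (already observed in the proof of Proposition~\ref{P:existenceOfValuation}), and pass to expectations via a convergence theorem. The only cosmetic difference is that the paper invokes monotone convergence while you invoke bounded convergence with the dominating constant $\theta$; both are immediate here.
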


\begin{proof}
We have established that almost surely there is only one valuation. This means that almost surely, $f^k_B(root) - f^k_A(root) \to 0$ monotonely as $k\to\infty$. The statement now follows from the principle of monotone convergence.
\end{proof}

\subsection{Interpretation in terms of integral equations} \label{S:integralEquation}
We want to obtain, to the extent possible, an analytical characterization of the distributions of $f_A^k(root)$, $f_B^k(root)$, and their common limit $f(root)$. We have \begin{equation} \label{recursion} f_A^{k+1}(root) = \min(\theta/2, l_i-f_A^{k+1}(v_i)),\end{equation} where $v_i$ ranges over the children of the root. Notice that $$f_A^{k+1}(v_i) \overset{\rm d}= f_B^k(root).$$ Clearly the same holds with the roles of Alice and Bob interchanged.

Suppose now that we describe the distribution of $f_B^k(root)$ by the function $$G_k(x) = P(f_B^k(root) \geq x),$$ and similarly $$F_{k+1}(x) = P(f_A^{k+1}(root) \geq x).$$ Then for $-\theta/2 \leq x \leq \theta/2$, $F_{k+1}(x)$ is the probability that there is no event in the inhomogeneous Poisson process of $v_i$ such that $l_i-f_A^{k+1}(v_i)<x$, or equivalently, that there is no $l_i$ such that $f_A^{k+1}(v_i) > l_i-x$. Here it doesn't matter whether the inequality is strict or not, so for given $x$ and $l_i$, $$P\left(f_A^{k+1}(v_i) > l_i-x\right) = G_k(l_i-x).$$ The sequence of $l_i$ such that $f_A^{k+1}(v_i) > l_i-x$ is therefore the set of points in a thinned Poisson point process of rate $dl^{d-1}G_k(l-x)$, and it follows that $$F_{k+1}(x) = \exp\left(-d\int_0^{\theta/2+x} l^{d-1}G_k(l-x)\dd l\right).$$ Therefore we define an operator $V_\theta$ on functions on the interval $[-\theta/2, \theta/2]$ by $$(V_\theta F)(x) = \exp\left(-d\int_0^{\theta/2+x}l^{d-1}F(l-x)\dd l\right).$$ We have $F_{k+1} = V_\theta(G_k)$, and by reversing the roles of Alice and Bob, $G_{k+1} = V_\theta(F_k)$. The distributions of $f_A^k(root)$ and $f_B^k(root)$ are thus obtained by starting from $F_0 = 0$ and $G_0 = 1$ (on the interval $[-\theta/2, \theta/2]$) and iterating the operator $V_\theta$. But since $G_1 = G_0$ it follows inductively that $F_2 = F_1$, $G_3=G_2$ and so on. Therefore in reality there is only one sequence of functions, obtained by iterating $V_\theta$ starting from the zero function.

The operator $V_\theta$ is decreasing in the sense that if $F(x)\leq G(x)$ for every $x$, then $(V_\theta F)(x) \geq (V_\theta G)(x)$ for every $x$. It follows that if we start from the function which is identically zero (or identically 1) and iterate, the sequence of functions must either converge to a fixed point or approach an attractor of period 2. Proposition \ref{P:uniquenessOfValuation} is equivalent to the statement that for every $\theta>0$ and every $d\geq 1$, the sequence converges to a fixed point. Actually it is easy to see that if we start from any real integrable function $F$, then after two iterations we have a function which takes values in $[0,1]$, in other words lies between $F_0$ and $G_0$. Therefore the subsequent iterates will be squeezed between $F_k$ and $G_k$ and thus converge to the same fixed point. In particular $V_\theta$ has only one fixed point. 

The similarity to the M\'ezard-Parisi integral equation \eqref{integralEquation} is clearly visible. Naturally we may define an operator $V_\infty$ by $$(V_\infty F)(x) = \exp\left(-d\int_0^{\infty}l^{d-1}F(l-x)\dd l\right).$$

It seems clear both from numerical evidence and in view of the results we have established, that as $\theta\to \infty$, the fixed point of $V_\theta$ should converge uniformly to a limit function which is a unique fixed point to $V_\infty$, in other words a unique solution to the M\'ezard-Parisi equation \eqref{integralEquation}. We certainly believe that a more detailed analysis will show this to be true (possibly the ideas of \cite{SS09} can be extended to $d>1$), but we leave it as an open conjecture since it is not necessary for our proof of Theorem~\ref{T:main}. Moreover, the natural way to obtain numerical results from \eqref{integralEquation} is to approximate $F(x)$ by 1 for large negative $x$ and by 0 for large positive $x$. Therefore in practice the numerical results based on \eqref{integralEquation} reduce to to solving the equation $V_\theta(F) = F$ on a bounded interval. 

\subsection{The density of the minimum diluted matching}
We now return to the mean field model $K_n$ on $n$ vertices. Suppose that $\theta$ and $d\geq 1$ are fixed and let the random variable $q_n$ be the proportion of vertices that are not matched (for which we pay the punishment of $\theta/2$) in the optimum diluted matching. Here and in the following we let $q = P(f = \theta/2) = F(\theta/2)$, where $F$ is the fixed point of $V_\theta$. In other words $q$ is the probability that Alice quits immediately in Graph Exploration on the $d$-PWIT. 

\begin{Prop} \label{P:density} As $n\to\infty$, $q_n \overset {\rm p}\to q$.
\end{Prop}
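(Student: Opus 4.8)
The plan is to rewrite $q_n$ as a vertex‑counting functional of the Graph Exploration value, transport it to the $d$‑PWIT using the couplings of Section~\ref{S:PWITlike}, and read off the limit from the almost surely unique valuation of Section~\ref{S:dPWIT}. First I would note that, the edge lengths being continuous, the optimum diluted matching on $K_n$ is almost surely unique, and that a vertex $v$ is left uncovered by it exactly when leaving $v$ out is optimal, i.e.\ when $M(K_n)=M(K_n-v)+\theta/2$; by Proposition~\ref{P:payoff} this says precisely that $f(K_n,v)=\theta/2$. Hence almost surely $q_n=\tfrac1n\#\{v:f(K_n,v)=\theta/2\}$, so $Eq_n=P(f(K_n,v_1)=\theta/2)$ and, expanding $Eq_n^2$ and using exchangeability, $Eq_n^2=\tfrac1nP(f(K_n,v_1)=\theta/2)+\tfrac{n-1}{n}P(f(K_n,v_1)=\theta/2,\,f(K_n,v_2)=\theta/2)$ for two fixed distinct vertices $v_1,v_2$. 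Thus it is enough to prove $P(f(K_n,v_1)=\theta/2)\to q$ and $P(f(K_n,v_1)=\theta/2,\,f(K_n,v_2)=\theta/2)\to q^2$: the first gives $Eq_n\to q$, and together they give $\operatorname{Var}(q_n)\to 0$, whence $q_n\overset{\rm p}\to q$.

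Next I would set up a finite‑depth squeeze. For a finite rooted graph $(G,v)$ let $f^k_A(G,v)\le f(G,v)\le f^k_B(G,v)$ be the bounds obtained by truncating the game after $k$ moves and assigning to the vertices reached in exactly $k$ moves the value most favourable to Alice, respectively to Bob, exactly as $f^k_A,f^k_B$ are defined on the $d$‑PWIT. Since edges longer than $\theta$ never affect the recursion, these quantities depend on $G$ only through the $(k,\theta)$‑neighbourhood of $v$, and when that neighbourhood is a tree they coincide with the corresponding bounds computed on the $\theta$‑cluster of a $d$‑PWIT rooted at $v$. Applying Lemma~\ref{L:reallyGeneral} with $m=1$ and with $m=2$, for fixed $k,\theta$ and with probability $1-o(1)$ as $n\to\infty$ the $(k,\theta)$‑neighbourhood of $v_1$ (respectively the disjoint union of those of $v_1$ and $v_2$) is isomorphic, edge lengths included, to that of the root of an independent $d$‑PWIT (respectively to those of the roots of two independent $d$‑PWITs). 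On that event $f^k_A(K_n,v_j)=f^k_A(root_j)$ and $f^k_B(K_n,v_j)=f^k_B(root_j)$, and for $m=2$ the two pairs are independent. Because $f^k_A\le f\le f^k_B\le\theta/2$, we have $\{f^k_A(K_n,v)=\theta/2\}\subseteq\{f(K_n,v)=\theta/2\}\subseteq\{f^k_B(K_n,v)=\theta/2\}$, so combining with the coupling yields, for each fixed $k$,
\[ P(f^k_A(root)=\theta/2)-o(1)\ \le\ P(f(K_n,v_1)=\theta/2)\ \le\ P(f^k_B(root)=\theta/2)+o(1), \]
together with the analogous inequality with squares on both sides for the two‑point probability (using the $m=2$ independence).

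It then remains to show that on the $d$‑PWIT one has $P(f^k_B(root)=\theta/2)\downarrow q$ and $P(f^k_A(root)=\theta/2)\uparrow q$ as $k\to\infty$; letting $n\to\infty$ and then $k\to\infty$ in the displayed inequalities then produces both required limits. For $f^k_B$ this is immediate: the events $\{f^k_B(root)=\theta/2\}$ decrease in $k$, and since $f^k_B(root)\downarrow f(root)$ almost surely by Proposition~\ref{P:uniquenessOfValuation}, their intersection equals $\{f(root)=\theta/2\}$, of probability $q$. For $f^k_A$ the events increase in $k$ and their union is contained in $\{f(root)=\theta/2\}$; the content is the reverse inclusion modulo a null set. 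On $\{f(root)=\theta/2\}$ the recursion \eqref{defValuation} forces $l_i-f(v_i)\ge\theta/2$ for every child $v_i$ of the root; a tie $l_i-f(v_i)=\theta/2$ has probability zero since $l_i-f(v_i)$ has a continuous distribution, so almost surely $f(v_i)<l_i-\theta/2$ strictly for each of the almost surely finitely many children $v_i$ of the root in the $\theta$‑cluster (for the others $l_i>\theta$ already forces $l_i-f(v_i)>\theta/2$). Since $f^k_A(v_i)\to f(v_i)$ by uniqueness of the valuation, for all large $k$ every child satisfies $l_i-f^k_A(v_i)>\theta/2$, so $f^k_A(root)=\theta/2$. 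Hence $P(f^k_A(root)=\theta/2)\uparrow q$, which completes the proof.

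The step I expect to be the main obstacle is making the finite‑depth squeeze fully rigorous on $K_n$ rather than on a tree: one has to check carefully that truncating Graph Exploration at depth $k$ genuinely produces two‑sided bounds on $f(K_n,v)$ in spite of short cycles and the self‑avoiding constraint, that these bounds are determined by the $(k,\theta)$‑neighbourhood of $v$, and that they reduce to the $d$‑PWIT bounds precisely when that neighbourhood is a tree (which the coupling supplies with high probability). The presence of an atom of the limit law at $\theta/2$ is exactly what prevents one from simply invoking weak convergence of $f(K_n,v_1)$ and forces the two‑sided squeeze by $f^k_A$ and $f^k_B$; within that squeeze the one genuinely non‑routine input is the absence of ties, used to push $P(f^k_A(root)=\theta/2)$ all the way up to $q$.
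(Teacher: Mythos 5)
Your proof is correct and follows essentially the same path as the paper's: squeeze the indicator of $v$ being unmatched between the truncated quantities $f^k_A(root)$ and $f^k_B(root)$ via the $d$-PWIT coupling (Lemma~\ref{L:reallyGeneral} with $m=1$ and $m=2$), then let $n\to\infty$ followed by $k\to\infty$. The one place you go beyond what the paper writes is the final step showing $P(f^k_A(root)=\theta/2)\uparrow q$: the paper simply asserts ``both sides converge to $q$ as $k\to\infty$,'' appealing to Proposition~\ref{EdiffToZero}, whereas that proposition gives $L^1$-convergence $f^k_A\to f$, which does not by itself control the probability of the atom at $\theta/2$. Your explicit argument --- that on $\{f(root)=\theta/2\}$ the almost sure absence of ties forces $l_i-f(v_i)>\theta/2$ strictly for each of the finitely many children in the $\theta$-cluster, so $f^k_A(root)=\theta/2$ for all large $k$ --- is exactly the detail needed to make that assertion rigorous, and is a genuine (if small) improvement in completeness over the paper's terse statement. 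Everything else (the reduction via Proposition~\ref{P:payoff}, the exchangeability computation for the variance, the observation that the game value on $K_n$ is governed by the $(k,\theta)$-neighbourhood once it is a tree) matches the paper's intent.
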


\begin{proof} We show that $Eq_n \to q$ and ${\rm var}(q_n) \to 0$. Let $k$ be a positive integer. With probability $1-o(1)$ as $n\to\infty$, the $(k,\theta)$-neighborhood of a given vertex $v$ in $K_n$ is isomorphic to the first $k$ levels of a $d$-PWIT. 

By choosing large $k$, we can make $E\left(f^k_B(root) - f^k_A(root)\right)$ as small as we please, and provided that the coupling to the $d$-PWIT succeeds, the game theoretical value of Graph Exploration on $K_n$ starting at $v$ is between $f^k_A(root)$ and $f^k_B(root)$.
Therefore conditioning on success of the coupling to the $d$-PWIT, $$P(f^k_A(root) = \theta/2) \leq P(\text{$v$ is not matched}) \leq P(f^k_B(root) = \theta/2),$$ and both sides converge to $q$ as $k\to\infty$.

To bound the variance of $q_n$ we simply take two vertices $v_1$ and $v_2$ of $K_n$ and estimate the probability that neither is matched. To do this we apply Lemma \ref{L:reallyGeneral} with $m=2$. With high probability the $(k, \theta)$-neighborhoods of $v_1$ and $v_2$ are disjoint and isomorphic to the $(k, \theta)$-neighborhoods of the roots of two independent $d$-PWITs. It follows that the probability that neither is matched converges to $q^2$.
\end{proof}

\subsection{The cost of the minimum diluted matching} \label{S:dilutedCost}
We wish to find the normalized limit cost of the minimum diluted matching. This cost splits naturally into the length of the participating edges and the cost of the penalties for the unmatched vertices. The penalties have been taken care of in the previous section, and therefore we concentrate on the participating edges. We let $M_n(\theta)$ be the total length of the participating edges in the optimum diluted matching.

\begin{Thm} \label{T:thetaMain}
For each $\theta$ and $d\geq 1$, there is a number $\beta_\theta(d)$ such that \begin{equation} \label{thetaMainEq} \frac{M_n(\theta)}{n} \overset{\rm p}\to \beta_\theta(d). \end{equation}
\end{Thm}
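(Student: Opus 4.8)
The strategy is to reduce the statement about $M_n(\theta)$ — the total length of the participating edges in the optimum diluted matching on $K_n$ — to the local convergence of the model to the $d$-PWIT, exactly as in the proof of Proposition~\ref{P:density}, but now extracting an edge-length statistic rather than a vertex-matched statistic. The natural object is the random variable
\[
\Lambda_n = \frac1n \sum_{v} (\text{length of the edge at } v \text{ in the optimum diluted matching}),
\]
where the sum is over all matched vertices and each participating edge is thereby counted twice. Thus $M_n(\theta)/n = \tfrac12 \, E_v(\ell_v)$ averaged appropriately, and it suffices to show $E\Lambda_n$ converges and $\mathrm{var}(\Lambda_n)\to 0$; then $\beta_\theta(d)$ is half the limit.

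First I would express the local quantity in terms of Graph Exploration. By Proposition~\ref{P:payoff} and the remark following it, the edge at $v$ in the optimum diluted matching is precisely the first edge of the path traced by mutual optimal play starting from $v$, provided such an edge exists (i.e. provided $v$ is matched). Moreover, whether $v$ is matched and which child it moves to depend only on the valuation $f$ on $T_\theta(v)$, hence only on the $(\infty,\theta)$-neighborhood of $v$. Using the $k$-step approximations $f_A^k, f_B^k$ and Proposition~\ref{EdiffToZero}, for fixed $k$ the event that $v$ is matched and the identity of its matching edge are determined, up to a vanishing-probability discrepancy, by the first $k$ levels around $v$. By Lemma~\ref{L:reallyGeneral} with $m=1$, this $(k,\theta)$-neighborhood is isomorphic to the top of a $d$-PWIT with probability $1-o(1)$, so $E\Lambda_n$ converges to the corresponding expectation on the $d$-PWIT, namely
\[
\beta_\theta(d) = \tfrac12\, E\!\left(\ell_{\mathrm{root}}\cdot \mathbf 1[f(\mathrm{root})<\theta/2]\right),
\]
where $\ell_{\mathrm{root}}$ is the length of the edge from the root to the child selected by optimal play. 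One must check this expectation is finite, which follows since participating edges have length at most $\theta$.

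For the variance, I would again follow the template of Proposition~\ref{P:density}: take two vertices $v_1,v_2$ chosen independently of the edge lengths, and apply Lemma~\ref{L:reallyGeneral} with $m=2$. With high probability their $(k,\theta)$-neighborhoods are disjoint and jointly distributed as two independent $d$-PWITs, so the contributions of $v_1$ and $v_2$ to $\Lambda_n$ become asymptotically independent, giving $E(\text{contribution at }v_1)(\text{contribution at }v_2)\to (\text{single-vertex mean})^2$ and hence $\mathrm{var}(\Lambda_n)\to 0$. Combined with the convergence of the mean, this yields $\Lambda_n \overset{\mathrm p}\to 2\beta_\theta(d)$, which is \eqref{thetaMainEq}.

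The main obstacle I anticipate is not conceptual but a matter of care in the truncation argument: the edge-length statistic is unbounded a priori in the sense that a vertex's matching edge, while always of length $\le\theta$ (edges longer than $\theta$ are irrelevant, as noted after the definition of Graph Exploration), is a finer functional of the local structure than the binary "matched/unmatched" indicator, so one must verify that the $k$-step approximation controls not only whether $v$ is matched but also the length of the chosen edge. Concretely, on the $d$-PWIT the optimal move from the root is to the child $v_i$ minimizing $\ell_i - f(v_i)$; using $f_A^k$ versus $f_B^k$ in place of $f$ could in principle change which child is selected. One handles this by noting that, on the event $f_A^k(\mathrm{root})=f_B^k(\mathrm{root})$ (whose probability tends to $1$ by Proposition~\ref{EdiffToZero}), the selected child and hence the edge length is unambiguous; on the complementary event of vanishing probability the edge length is bounded by $\theta$, so its contribution to the mean is $o(1)$. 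This is the same squeezing device used for Proposition~\ref{P:density}, merely applied to $\ell_{\mathrm{root}}\mathbf 1[\text{matched}]$ rather than to $\mathbf 1[\text{matched}]$, so no genuinely new idea is required.
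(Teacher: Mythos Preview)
Your vertex-centric approach is sound and does establish \eqref{thetaMainEq}, but the squeezing step as written has a gap. Proposition~\ref{EdiffToZero} gives $E\bigl(f_B^k(\text{root}) - f_A^k(\text{root})\bigr)\to 0$, which yields convergence of the difference to $0$ in probability but \emph{not} $P\bigl(f_A^k(\text{root}) = f_B^k(\text{root})\bigr)\to 1$; on an infinite $\theta$-cluster these typically differ for every finite $k$. Moreover, equality at the root alone would not pin down the first move, since the $\mathrm{argmin}$ of $\ell_i - f(v_i)$ depends on the values at the children, and the finite-graph values $\phi_i$ are only squeezed between $f_B^k(v_i)$ and $f_A^k(v_i)$. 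The repair is easy: invoke the a.s.\ convergence $f_A^k(v_i), f_B^k(v_i)\to f(v_i)$ for the (a.s.\ finitely many) children $v_i$ in the $\theta$-cluster, together with the a.s.\ strictness of the $\mathrm{argmin}$; then for large $k$ the selected child, and hence its edge length, is determined by the first $k$ levels, and the bounded ($\leq\theta$) complementary contribution is $o(1)$.

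The paper takes a genuinely different, edge-centric route. For a fixed edge $e=(u,v)$ conditioned to have length $\ell$, it couples the $(k,\theta)$-neighborhoods of \emph{both} endpoints to two independent $d$-PWITs (so Lemma~\ref{L:reallyGeneral} is invoked with $m=2$ for the mean and $m=4$ for the variance, versus your $m=1$ and $m=2$). The participation criterion becomes $\ell \leq f(u')+f(v')$ with $u',v'$ the independent PWIT roots, which after integrating over $\ell$ yields directly
\[
\beta_\theta(d)=\frac{d^2}{2}\iint\limits_{\substack{-\theta/2<x,y<\theta/2\\ x+y\geq 0}} (x+y)^{d-1}F(x)F(y)\,dx\,dy,
\]
the finite-$\theta$ analogue of the M\'ezard--Parisi formula. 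This explicit form is what the paper actually uses downstream (defining $\beta_M(d)$ in Section~\ref{S:perfectMatching} and the $d=1$ computation). Your approach is more economical in the coupling and parallels Proposition~\ref{P:density} more closely, but it delivers $\beta_\theta(d)$ only as $\tfrac12\,E\bigl[\ell_{i^*}\mathbf 1(f(\text{root})<\theta/2)\bigr]$, from which the double integral would need to be extracted by a separate calculation.
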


\begin{proof}
Recall that the edge lengths are distributed like $(nX)^{1/d}$, where $X$ is exponential of mean $n$. Therefore the density function of the length of a single edge is $$\frac{dl^{d-1}}{n}\cdot\exp\left(-\frac{l^d}{n}\right).$$ The expectation of $M_n(\theta)$ is the total number of edges in the graph times the expected contribution to $M_n(\theta)$ from a single edge:
 \begin{equation} \label{contribution} EM_n(\theta) = \binom{n}2\cdot \frac{d}n\cdot\int_0^{\theta} l^d\cdot \exp(-l^d/n)\cdot P(\text{participation given length $l$})\dd l.\end{equation} 
Deleting the factor $\exp(-l^d/n)$ will introduce an error of at most a factor $(1-\theta^d/n)$. Normalizing to obtain a quantity of order 1, we get \begin{equation} \label{middlestage} \frac{EM_n(\theta)}{n} = \frac{d}2\cdot\int_0^{\theta} l^d\cdot P(\text{participation given length $l$})\dd l + o(1).\end{equation}

We now choose a positive integer $k$. We explore the $(k, \theta)$-neighborhood of the endpoints $u$ and $v$ of the edge $e$ and discard the cases of ``failure'' when we cannot successfully couple to two independent $d$-PWITs to depth $k$. By choosing $k$ suitably as a function of $n$ we can make $k$ tend to infinity while the probability of failure is $o(1)$. 

Given that the coupling succeeds, the maximum length at which $e$ participates lies between $f_A^k(u') + f_A^k(v')$ and $f_B^k(u') + f_B^k(v')$, where $u'$ and $v'$ are the roots of the two $d$-PWITs. Replacing these bounds by $f(u') + f(v')$ will introduce another error of $o(1)$. 
Hence \eqref{middlestage} is equal to $$\frac{d}2\cdot \int_0^{\theta} l^d\cdot P\left(l\leq f(u') + f(v')\right)\dd l + o(1).$$ Here $f(u')$ and $f(v')$ are independent and satisfy $P(f\geq x) = F(x)$ where $F$ is the fixed point of $V_\theta$.
By partial integration it follows that \begin{equation} \label{doubleIntegral} \frac{EM_n(\theta)}{n} \to \frac{d^2}2\cdot \underset{\substack{ -\theta/2<x,y<\theta/2 \\ x+y\geq 0\\}}{\int\int} (x+y)^{d-1}F(x)F(y)\dd x\dd y.\end{equation}

We denote the right hand side of \eqref{doubleIntegral} by $\beta_\theta(d)$. To see that \eqref{doubleIntegral} can be strengthened to convergence in probability as stated in \eqref{thetaMainEq} we again apply Lemma~\ref{L:reallyGeneral}, this time with $m=4$. It follows that the expected contribution from an arbitrary pair of edges to the square of $M_n(\theta)$ is asymptotically the same as the square of the expected contribution of one edge, and that therefore ${\rm var}(M_n(\theta)) = o(n^2)$. \end{proof}

\subsection{Perfect matching} \label{S:perfectMatching}
Here we complete the proof of Theorem~\ref{T:main}. Assuming that $n$ is even, we study the length $M_n$ of the minimum perfect matching. Naturally we expect perfect matching to correspond to infinite $\theta$, and the remaining step essentially amounts to showing that we can interchange the order in which $n$ and $\theta$ go to infinity. For the bipartite graph and $d=1$ this was proved in \cite{A92}. Without claims of originality we give a self-contained proof valid for $d>0$. This proof is based on expander properties of random graphs along the same lines as \cite{F04}. I thank David Aldous for pointing out that the method of \cite{F04} applies here.  

Recall that $\beta_\theta(d)$ is defined as the right hand side of \eqref{doubleIntegral}. Clearly $\beta_\theta(d)$ is upper-bounded according to \eqref{upperBound} and increasing in $\theta$ by \eqref{thetaMainEq}. We define $$\beta_M(d) = \lim_{\theta\to\infty} \beta_\theta(d).$$ 
What remains is to show that for every $\epsilon>0$, $$P\left(\frac{M_n}{n}\leq \beta_M(d)+\epsilon\right) \to 1$$ as $n\to\infty$.

For every $\epsilon>0$ and every $q>0$, we can find an $n$ and a $\theta$ such that with as high probability as we please, the minimum diluted matching has cost at most $(\beta_M(d) + \epsilon)\cdot n$ and density at least $1-q$ in terms of vertices covered. 
Therefore in order to complete the proof of Theorem~\ref{T:main} it suffices to show the following:

\begin{Prop} \label{P:completing} With high probability, a partial matching that covers a $(1-q)$-fraction of the vertices can be completed to a perfect matching in a way that increases the total length by at most $\delta n$, where $\delta$ depends on $q$ but not on $n$, and $\delta\to 0$ as $q\to 0$. \end{Prop}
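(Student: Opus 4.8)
The plan is to complete the partial matching greedily using short augmenting paths, and to control the total extra cost via an expansion property of the random graph. Let $U$ be the set of unmatched vertices, with $|U| \leq qn$; we may assume $|U|$ is even (otherwise discard one vertex, an $O(1/n)$ effect). The idea is to repeatedly find, for two unmatched vertices $u, u' \in U$, an augmenting path between them of bounded combinatorial length whose edges are all short, and flip it. Each such flip reduces $|U|$ by $2$ while increasing the total matching length by at most the sum of the lengths of the new edges minus the old ones, hence by at most the total length of the (constantly many) edges on the augmenting path. So the whole completion costs at most a constant times the number of short edges used across all $|U|/2$ augmentations, and the task reduces to showing that augmenting paths of bounded length using only edges of length $O(1)$ (say $\le \theta'$ for a suitable constant $\theta'$) can always be found, with total length $O(\delta n)$ where $\delta \to 0$ as $q \to 0$.

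First I would fix a length threshold $\ell_0$ and restrict attention to the subgraph $H$ of $K_n$ consisting of edges of length at most $\ell_0$. By the choice of the pseudo-dimension $d$ model, $H$ is (essentially) an Erd\H{o}s--R\'enyi random graph $G(n, p)$ with $p \sim \ell_0^d / n$, so its average degree $c = \ell_0^d$ can be made an arbitrarily large constant. Classical expansion estimates (as in Frieze \cite{F04}) give that with high probability every set $S$ of vertices with $|S| \le \alpha n$ satisfies $|N_H(S)| \ge \lambda |S|$ for some $\lambda > 1$, provided $c$ is large enough; more refined versions give the Hall-type condition needed for near-perfect matchings. Using these expansion properties I would run the standard alternating-BFS argument: starting from an unmatched vertex $u$, grow an alternating tree in $H$ relative to the current matching; expansion forces the tree to reach another unmatched vertex within $O(\log(1/q))$ levels, yielding a short augmenting path. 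Flipping it, and iterating, exhausts $U$.

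To bound the cost, note that the augmenting path found from $u$ has $O(\log(1/q))$ edges, each of length at most $\ell_0$, so each augmentation costs at most $O(\ell_0 \log(1/q))$; summing over the $|U|/2 \le qn/2$ augmentations gives a total increase of at most $O(q \log(1/q) \cdot \ell_0 \cdot n) =: \delta n$, and $\delta = O(q \log(1/q) \ell_0) \to 0$ as $q \to 0$ with $\ell_0$ fixed. (One must be slightly careful that successive flips don't interfere badly — this is handled by the standard fact that the symmetric difference of two matchings is a union of paths and cycles, so augmentations can be carried out one at a time on the updated matching, and the expansion hypothesis is monotone enough to survive removing a vanishing fraction of vertices.)

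The main obstacle I expect is making the expansion argument deliver a genuinely \emph{bounded-cost} augmenting path rather than merely a bounded-length one, uniformly as $|U|$ shrinks: early augmentations are easy (many unmatched vertices to hit), but the last few require the expander to route an augmenting path when $U$ is already tiny, and one needs the Hall/expansion condition to hold down to sets of size $o(n)$, which is exactly the regime where $G(n, c/n)$ is delicate. This is precisely the point addressed by the method of \cite{F04}, and I would invoke that machinery: choosing $c = \ell_0^d$ a sufficiently large absolute constant guarantees that a random graph of that average degree has a perfect (or near-perfect) matching and, more to the point, that any partial matching missing few vertices can be augmented within $H$. The remaining bookkeeping — that the cost increment is controlled by path lengths, and that the error terms from the coupling of $H$ to $G(n,p)$ are negligible — is routine.
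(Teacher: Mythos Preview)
Your overall strategy --- complete via short augmenting paths in an auxiliary expander, and sum the costs --- matches the paper's, but there is a genuine gap in your choice of expander. You take $H$ to be the graph of edges of length at most a fixed $\ell_0$, so that $H$ is essentially $G(n,c/n)$ with $c=\ell_0^d$ a constant. But $G(n,c/n)$ with constant $c$ is \emph{not} an expander for small sets: it has about $e^{-c}n$ isolated vertices, so the claim ``$|N_H(S)|\ge\lambda|S|$ for all $|S|\le\alpha n$'' already fails for singletons. If an unmatched vertex happens to be isolated in $H$ (and you have no independence between $H$ and the set $U$ of unmatched vertices, since both are determined by the same edge lengths), there is no augmenting path in $H$ from it. Pushing $\ell_0$ up to kill isolated vertices requires $\ell_0^d>\log n$, and then your cost bound $O(q\log(1/q)\,\ell_0\,n)$ acquires a factor $(\log n)^{1/d}$ and is no longer of the form $\delta(q)\,n$ with $\delta$ independent of $n$.

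The paper fixes exactly this in two moves. First, instead of thresholding by length, it takes the $13$ cheapest (green, suitably oriented) edges out of each vertex; this $k$-nearest-neighbour graph has minimum degree $13$ by construction and is a genuine expander, so every unmatched vertex has an augmenting path of length $O(\log(n/k))$ when $k$ unmatched remain (and the total edge count is $O(qn\log(1/q))$, as you essentially compute). Second --- and this is the idea your sketch is missing --- the paper splits the randomness: edges are coloured red/green, the partial matching is the optimal diluted matching on the \emph{red} edges only, and the expander $D$ is built from the \emph{green} edges. This makes the combinatorics of the completion independent of the actual lengths of the green edges used, so the expected completion cost is simply (number of $D$-edges used) $\times$ (expected length of a $D$-edge), both of which are controlled. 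Without such a fresh source of randomness, bounding the total length of the $D$-edges that the (length-dependent) augmentation process happens to select is not straightforward. Your parenthetical remark that ``successive flips don't interfere badly'' is fine; the real issue is one level earlier, at the construction of the expander and its independence from the matching.
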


We introduce some extra edges by letting each pair of vertices give rise to a Poisson process of edges. More precisely, for each pair of vertices there is a rate 1 Poisson point process on the positive real numbers, and we let the sequence of edges have lengths given by $(nX_i)^{1/d}$ where $X_i$ are the points of the process. Obviously the extra edges do not change the minimum matching. We randomly color every edge red or green, where the probability of red is $1-p$ and the probability of green is $p$, for some $p$ which will go to zero as $n$ goes to infinity. 

First we find the minimum diluted matching $M_{Red}$ on the red edges. Then before looking at the green edges we choose arbitrarily a bipartition of the vertices into two sets $A$ and $B$ of size $m=n/2$ such that every edge of $M_{Red}$ connects a vertex of $A$ to a vertex of $B$. Then we look at the green edges that connect $A$ to $B$ and give each of them a random orientation by independent coin flips. We let $D$ be the set consisting of the 13 cheapest green edges directed from each vertex to the opposite side of the partition. 

\begin{Lemma} With high probability $D$ has the following expander property: If $S$ is a set of vertices from one side of the partition, and $1\leq \left|S\right| \leq m/3$, then $\left|S'\right| > 2\left|S\right|$, where $S'$ denotes the set of $D$-neighbors of $S$. \end{Lemma}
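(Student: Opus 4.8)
The plan is to establish the expander property by a first-moment (union bound) argument over all "bad" sets $S$, exactly in the spirit of classical arguments for random regular-like graphs (e.g. \cite{F04}). Fix one side of the partition, say $A$, and a set $S \subseteq A$ with $|S| = s$ where $1 \le s \le m/3$. The set $D$ of directed edges is obtained by letting each vertex select its $13$ cheapest out-going green edges to the opposite side; crucially, for the purpose of bounding the probability that $|S'| \le 2s$, what matters is only the \emph{combinatorial} structure of which $13$ vertices on the opposite side each vertex of $S$ points to. Since the green edges between $A$ and $B$ are present with probability $p$ and oriented by independent fair coins, and since the cost labels are i.i.d.\ and independent of the coloring, conditionally on enough green edges being available the $13$ out-neighbors of each vertex are (close to) a uniformly random $13$-subset of the $m$ vertices on the opposite side, independently across vertices. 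So I would first argue that with high probability every vertex has at least $13$ available green out-edges (this needs $pm \to \infty$, which holds since $p$ tends to $0$ slowly enough that $pn \to \infty$ — a Chernoff bound handles the exceptional event), and then work with the uniform model.

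Next, the union bound. The event $|S'| \le 2s$ for a fixed $S$ with $|S|=s$ is contained in the event that there exists a set $T \subseteq B$ with $|T| = 2s$ such that all $13s$ out-edges from $S$ land in $T$. For a fixed $T$, each vertex of $S$ independently chooses its $13$ out-neighbors inside $T$ with probability at most $\binom{2s}{13}/\binom{m}{13} \le (2s/m)^{13}$ (up to lower-order corrections from the "at least $13$ available" conditioning, which only help). Hence
\[
P\big(\exists S,\, |S|=s,\, |S'|\le 2s\big)
\;\le\; 2\binom{m}{s}\binom{m}{2s}\left(\frac{2s}{m}\right)^{13 s},
\]
the factor $2$ for the two sides. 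Using $\binom{m}{s}\le (em/s)^s$ and $\binom{m}{2s}\le (em/(2s))^{2s}$, the right-hand side is at most
\[
2\left[\frac{em}{s}\cdot\left(\frac{em}{2s}\right)^{2}\cdot\left(\frac{2s}{m}\right)^{13}\right]^{s}
\;=\; 2\left[\,C\left(\frac{s}{m}\right)^{10}\right]^{s}
\]
for an absolute constant $C$. Since $s/m \le 1/3$, the bracket is bounded by $C\cdot 3^{-10}<1$, so the summand decays geometrically in $s$; summing over $2 \le s \le m/3$ gives a total of order $(s/m)^{10}$ at $s=1,2$, which can be made $o(1)$ — and for $s=1$ the exponent $10s=10$ already forces the probability to be $O(m^{-10})$. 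Summing over all $s$ therefore yields a bound tending to $0$ as $n\to\infty$, which is exactly the assertion.

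The main obstacle I anticipate is not the counting — which is routine once the constants $13$, $2|S|$, and $m/3$ are pinned down — but the \textbf{decoupling of the orientation from the cost-minimization}. The set $D$ is defined via the $13$ \emph{cheapest} out-edges, so the identities of the $13$ chosen neighbors are correlated with the edge lengths; one must check that conditioning on the length data does not distort the near-uniformity of the out-neighbor sets or introduce dependence across source vertices. The clean way around this is to note that, conditionally on the \emph{set} of green out-edges available at each vertex (an event we have already controlled), the relative order of their lengths is a uniformly random permutation independent across vertices, so the "$13$ cheapest" are a uniformly random $13$-subset, independent across vertices — exactly the model used above. A secondary technical point is making the "with high probability every vertex has $\ge 13$ green out-edges" step compatible with $p\to 0$; this just requires $p$ to decay slower than $1/n$ times a suitable logarithmic factor, which we are free to arrange. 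With these two points in hand the union-bound computation closes the proof.
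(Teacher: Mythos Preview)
Your proposal is correct and follows essentially the same union-bound argument as the paper: the same estimate $2\sum_{s}\binom{m}{s}\binom{m}{2s}(2s/m)^{13s}$, the same bound $\binom{m}{k}\le(em/k)^k$ yielding a summand $(e^3 2^{11}(s/m)^{10})^s$, and the same conclusion that the sum is $o(1)$ (the paper uses log-convexity of the summand to bound it by the endpoint terms, getting $O(n^{-9})$, but your cruder estimate suffices).

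The two technical worries you flag---availability of $13$ green out-edges, and decoupling the cost-ordering from the identity of the chosen neighbors---are non-issues in the paper's setup and the paper does not discuss them. The reason is that the model here is a Poisson \emph{multigraph}: between each ordered pair there is an entire rate-$1$ Poisson process of edges, each independently colored and oriented. Hence every vertex has almost surely infinitely many green out-edges to the opposite side, and by the superposition/symmetry of independent Poisson processes the endpoints of the $13$ cheapest are exactly i.i.d.\ uniform over the $m$ vertices opposite, independently across source vertices. So the bound $(2s/m)^{13s}$ is immediate, with no conditioning or near-uniformity argument required.
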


\begin{proof} If this condition is violated, then there is a positive integer $s\leq m/3$ and a set of $s$ vertices on one side of the partition such that all its $13s$ edges go into a certain set of $2s$ vertices on the other side. The probability that this happens is at most $$2\cdot\sum_{1\leq s\leq m/3} \binom{m}{s}\binom{m}{2s}\left(\frac{2s}{m}\right)^{13s}.$$ Using the standard inequality $$\binom{m}{k} \leq \left(\frac{me}{k}\right)^k,$$ we find that the failure probability is at most $$2\cdot\sum_{1\leq s\leq m/3} e^{3s}2^{11s}\left(\frac{s}{m}\right)^{10s}.$$ 

By log-convexity of the summand, the maximum of the terms is attained by either the first or the last one. The first term ($s=1$) is $O(1/n^{10})$ and the last one is at most $$\left(\frac{e^32^{11}}{3^{10}}\right)^{m/3} \leq 0.95^{n}.$$ For large $n$ the first term will dominate, and since there are $O(n)$ terms, the failure probability of the expander property is $O(1/n^9)$. 
\end{proof}

The number 13 was chosen to make the constant in the left hand side smaller than 1. If the expander property does not hold, then our scheme fails. If we want to keep the expected cost of completing the matching to $o(n)$ we may have to replace the number 13 by a larger number depending on $d$. We can then, in the cases of failure, just pick an arbitrary matching of red edges and the expected length will be $O(n^{1+1/d})$. This will do provided that the probability of failure is sufficiently small.

\begin{proof}[Proof of Proposition~\ref{P:completing}] If the expander property holds, then we complete $M_{Red}$ to a perfect matching by using the edges in $D$. We extend the matching successively by finding an alternating path that connects two unmatched vertices. When $k$ unmatched vertices remain on each side, we can find such an alternating path of length $O(\log(n/k))$. A simple calculation shows that the total number of edges of $D$ that become involved in completing the matching is $O(nq\log(n/q))$. Notice that this bound is deterministic and holds whenever the expander property holds.

The completion of the matching is done independently of the actual lengths of the edges in $D$, so the expected total cost of completing the matching (given that the expander property holds) is bounded by $O(nq\log(n/q))$ times the expected length of an edge in $D$. Now it is clear that if $p\to 0$ as $n\to\infty$, we can keep the total length of $M_{Red}$ to $M_n(\theta)+o(n)$ while at the same time keeping the cost of the completion process to $o(n)$.   
\end{proof}

This completes the proof of Theorem~\ref{T:main}.

\subsection{The $\pi^2/12$-limit for $d=1$}
The case $d=1$ corresponds to the model studied by Aldous in \cite{A92, A01}. In our terminology his result (anticipated in \cite{MP85}) is that $\beta_M(1) = \pi^2/12$. We briefly show how to derive this from our present approach. In \cite{A01, MP85} the calculations start from equation \eqref{integralEquation} which has the solution $F(x) = 1/(1+e^x)$. This corresponds to infinite $\theta$, but in our approach we arrive at the equation $V_\theta(F) = F$ for finite $\theta$, which for $d=1$ becomes $$F(x) = \exp\left(-\int_0^{\theta/2+x}F(l-x)\dd l\right).$$
Since we know that the equation has a unique solution, it suffices to verify that $$F(x) = \frac{1+q}{1+e^{(1+q)x}}$$ solves it, where $q$ satisfies $$\theta = \frac{-2\log q}{1+q}.$$ Via \eqref{doubleIntegral} it can then be verified that the limit cost for finite $\theta$ is given by \begin{equation} \label{explicitbeta} \beta_\theta(1) =  \int_q^1\frac{-\log t}{1+t}\\d t,\end{equation} from which the $\pi^2/12$-result is obtained by putting $q=0$.
 The limit cost \eqref{explicitbeta} for the minimum density $1-q$ matching also follows in a completely different way from the results of \cite{W09}. A more streamlined derivation along the present lines is given in work in preparation jointly with G.~Parisi.

\section{The traveling salesman problem} \label{S:TSP}
\subsection{Analog of Theorem~\ref{T:main} for the TSP}
In Sections~\ref{S:generalizedGraphExploration}--\ref{S:limitTSP} we establish the analog of Theorem~\ref{T:main} for the traveling salesman problem. It follows from a theorem of A.~Frieze \cite{F04} that in the case $d=1$, the length of the traveling salesman tour is asymptotically the same as the length of the polynomially solvable 2-factor problem. The idea is to ``patch'' the minimum 2-factor to a tour by replacing $o(n)$ edges, and to show that this can be done at small increase in total length. The proof is similar to our proof of Proposition~\ref{P:completing} but more complicated due to the global constraint of the TSP. By the concavity of the function $X\mapsto X^{1/d}$, the theorem extends automatically to $d\geq 1$ but in fact, under minor changes, Frieze's proof works also for $0<d<1$. We do not discuss the details of Frieze's result here, but it means that we can obtain results for the TSP by studying the more tractable 2-factor problem. 

Our treatment of the 2-factor/TSP closely parallels the matching problem. We focus on the differences, and omit the details where they are similar to those of the matching problem. 

Let $L_n$ be the length of the minimum traveling salesman tour. The analog of Theorem~\ref{T:main} is
\begin{Thm} \label{T:mainTSP} For every $d\geq 1$ there is a number $\beta_{TSP}(d)$ such that 
\begin{equation} \label{mainConjectureTSP} \frac{L_n}{n} \overset{\rm p}\to \beta_{TSP}(d).\end{equation}
\end{Thm}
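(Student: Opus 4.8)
The plan is to mirror, almost line for line, the argument developed for minimum matching in Section~\ref{S:matching}, replacing the diluted matching problem and ordinary Graph Exploration by their natural analogs for the minimum $2$-factor problem, and then invoking Frieze's patching argument to pass from the $2$-factor to the TSP. Concretely, I would first introduce the \emph{diluted $2$-factor problem}: instead of requiring each vertex to have degree exactly $2$, allow any subgraph of maximum degree $\le 2$ (a union of paths and cycles), penalizing each vertex of degree $0$ or $1$. (The natural bookkeeping is a penalty of $\theta/2$ for each ``missing half-edge'', i.e.\ $\theta$ for a degree-$0$ vertex and $\theta/2$ for a degree-$1$ vertex.) As mentioned in the excerpt, the analog of Graph Exploration is a \emph{comply-constrain} version: because each vertex now wants to be matched twice, when a player enters a vertex they may be obliged to continue the walk out of it rather than being free to terminate. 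I would set up the game so that, as in Proposition~\ref{P:payoff}, Bob's payoff starting at $v$ equals a difference of optimal diluted $2$-factor costs (now with appropriate boundary states recording whether $v$ has $0$ or $1$ half-edges already committed), and verify the corresponding recursion and its minimax interpretation.

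Next I would transfer the local-weak-convergence machinery verbatim: Lemma~\ref{L:reallyGeneral} already couples $(k,\theta)$-neighborhoods in the pseudo-dimension $d$ mean field model to independent $d$-PWITs, and nothing about it is specific to matching. So it suffices to analyze the comply-constrain Graph Exploration on the $d$-PWIT. Here I would redefine valuations so that each vertex carries a value for each relevant boundary state, satisfying a recursion analogous to \eqref{defValuation} but with a ``comply'' branch; construct the extremal valuations $f_A,f_B$ by the same monotone-propagation argument as in Propositions~\ref{P:existenceOfValuation}; and then prove the crucial uniqueness statement, the analog of Proposition~\ref{P:uniquenessOfValuation}, that almost surely $f_A=f_B$. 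The engine for uniqueness is again the finiteness of the tree $R$ of near-optimal play (Proposition~\ref{P:finiteR} and Lemma~\ref{L:termination}): one shows $EH(k)\to 0$ for small $\delta$, using that for $d\ge1$ the length density $dl^{d-1}$ is increasing so that the relevant Poisson measure on the $l$-$f$-square increases under rightward translation. The bookkeeping is heavier because of the extra boundary state, but the structure of the contraction estimate — Alice's optimal move lands on a ``terminal'' (here: forced-to-comply-into-a-leaf) child with probability bounded below by $\exp(-\theta^d)$, and there are $o(1)$ extra $\delta$-reasonable moves — should go through with the same case analysis. From the unique valuation $f$ one reads off, exactly as in Section~\ref{S:integralEquation}, an integral operator (a two- or several-component version of $V_\theta$), its unique fixed point, and hence limiting formulas for the density of ``unsatisfied'' vertices (Proposition~\ref{P:density}) and for the normalized cost $M_n^{2F}(\theta)/n \to \beta^{2F}_\theta(d)$ (Theorem~\ref{T:thetaMain}), with second-moment control via $m=4$ in Lemma~\ref{L:reallyGeneral} giving convergence in probability. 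Letting $\theta\to\infty$ defines $\beta^{2F}(d) := \lim_{\theta\to\infty}\beta^{2F}_\theta(d)$, and one must check — as in Proposition~\ref{P:completing} — that a near-optimal diluted $2$-factor covering a $(1-q)$-fraction of vertices can be completed to a genuine $2$-factor at cost $o(n)$, using the same red/green edge split and expander argument. Finally, Frieze's theorem (cited as \cite{F04}, extended to all $d>0$ as remarked in the excerpt) says the minimum TSP tour length is asymptotically the minimum $2$-factor length, so $\beta_{TSP}(d)=\beta^{2F}(d)$ and \eqref{mainConjectureTSP} follows.

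The main obstacle is the \emph{comply-constrain uniqueness} step — the analog of Proposition~\ref{P:uniquenessOfValuation}. In ordinary Graph Exploration, the option to terminate at every move is what prevents any single near-optimal trajectory from drifting indefinitely: Bob's ability to cash out at any time caps the total of Alice's ``mistakes'' by $\theta$. In the $2$-factor game a player who has just entered a degree-one-so-far vertex may be \emph{forced} to move, so the naive ``terminate and collect'' argument bounding accumulated mistakes needs to be re-examined: one must show that forced continuations still cannot let $f_A$ and $f_B$ separate, i.e.\ that a player always has, within a bounded budget, a genuine exit (a chance to complete the local configuration and stop the exploration) even when constrained. I expect this to require a slightly more careful definition of the game's termination options — e.g.\ allowing a player who is forced to continue to instead ``pay off the remaining half-edge at rate $\theta/2$'' at the cost of leaving that vertex degree-deficient — chosen precisely so that Proposition~\ref{P:payoff}'s equivalence with the diluted $2$-factor cost still holds and the budget argument of Proposition~\ref{P:uniquenessOfValuation} still closes. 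Once the right game is pinned down, the rest is a matter of carrying the matching-case estimates through a larger but still finite state space, which the paper explicitly signals it will not belabor.
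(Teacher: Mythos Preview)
Your overall architecture is right --- diluted $2$-factor, PWIT coupling, unique valuation, $\theta\to\infty$, then Frieze --- but you have misidentified what the comply-constrain game actually is, and this misidentification propagates into a wrong valuation recursion, a wrong state space, and a worry about a difficulty that does not arise.

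The paper's generalized Graph Exploration for capacity $2$ is \emph{not} a game in which a player who enters a vertex is ``obliged to continue''. Termination at cost $\theta/2$ is always available. The new feature is that, because each player may use each vertex twice, on a tree a player can move \emph{upward} along the edge the opponent just used, which has the effect of cancelling that opponent's move and everything played in the subtree. Reformulated, this means that at each vertex the player who is \emph{not} about to move may forbid one of the mover's downward options; the mover then chooses among the remaining ones or terminates. Consequently the valuation is still a single number per vertex, satisfying
\[
f(v)=\min\bigl(\theta/2,\ {\min}_2(l_i-f(v_i))\bigr),
\]
with ${\min}_2$ the second smallest. There is no multi-component ``boundary state'' recursion, and your concern that ``a player who has just entered a degree-one-so-far vertex may be forced to move'' simply does not occur: the budget argument of Proposition~\ref{P:uniquenessOfValuation} (Bob can always terminate once Alice's accumulated mistakes exceed~$\theta$) goes through unchanged.

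The genuine new difficulty, which you do not mention, is in the branching analysis of near-optimal play. Alice now makes \emph{two} kinds of decisions: she moves (after Bob has forbidden her best option), and she forbids one of Bob's options before Bob moves. A $\delta$-reasonable mistake of the second kind lets Bob play a ``super-optimal'' move, and bounding the resulting branching requires a separate case analysis (conditioning on $f_B(v)=\theta/2$ versus $f_B(v)<\theta/2$, and on the location of the best point relative to the diagonal of the $l$-$f$-square). To make the contraction close, the paper replaces the weight $1/2$ in the definition of $H(k)$ by a free parameter $\lambda\in(0,1)$ and shows
\[
\frac{EH(k+2)}{EH(k)}\le 1-\exp(-\theta^d)+\lambda+\lambda^2\exp(-\theta^d)+o(1)<1
\]
for $\lambda$ small and then $\delta$ small. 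This is the step you would need to redo; the rest of your outline (density, cost via the double integral with $F$ now the fixed point of the ${\min}_2$-operator $W_\theta$, completion to a full $2$-factor, Frieze) matches the paper.
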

Since we divide the total length by $n$, $\beta_{TSP}(d)$ is the limit average length of an edge in the optimum solution. It was proved in \cite{W09} that $\beta_{TSP}(1) \approx 2.0415481864$ can be expressed as $$\frac12\int_0^\infty y\dd x,$$ where $y$ satisfies $$\left(1+\frac x2\right)e^{-x} + \left(1+\frac y2\right)e^{-y} = 1.$$
We have computed $\beta_{TSP}(2)$ numerically, and found that $$ \beta_{TSP}(2) \approx 1.285153753372032.$$ This is consistent with the value 0.7251 given in \cite{KM89, PM98}, since the normalizations differ by a factor $\sqrt{\pi}$.

It is interesting to compare this value to the famous Beardwood-Halton-Hammersley constant \cite{BHH59} for the euclidean TSP in two dimensions. In \cite{CBBMP97} this constant was estimated to $0.7120$, but obtaining rigorous numerical bounds has proved annoyingly difficult. With the ``euclidean'' normalization (dividing by $\sqrt{\pi}$), our value for $\beta_{TSP}(2)$ is $0.725070360909803$, which is within $2\%$ of its euclidean counterpart. It is worth pointing out that in the euclidean setting, the TSP does not seem to be equivalent to the 2-factor problem. 

\subsection{Generalized Graph Exploration} \label{S:generalizedGraphExploration}
To carry out the analysis for the 2-factor problem, we generalize the game of Graph Exploration to a setting where each vertex has a nonnegative integer \emph{capacity}. The capacity is a bound on the degree of the vertex in a feasible solution to the corresponding optimization problem. The matching problem is obtained by setting all capacities to 1, and in the 2-factor problem all capacities are equal to 2. In the generalized game, the rules are as follows:

\begin{itemize}

\item The game starts at a specific vertex, and the players take turns choosing the edges of a walk (not necessarily self-avoiding).

\item A player who chooses an edge pays the length of that edge to the opponent.

\item Each player can use an edge at most once, and the set of edges chosen by a player must satisfy the capacity constraints, that is, each player can use each vertex at most a number of times equal to its capacity.

\item Moreover, Bob can use the starting point only one time less than its capacity. One way of thinking about this is to regard the game as starting by Bob entering the graph at the starting point through an edge coming from the outside, and that therefore Bob has already used the starting point once.   

\item A player can, at any time, terminate the game by paying $\theta/2$ to the opponent.

\end{itemize} 

\subsection{The diluted flow problem}
The generalization of the matching problem to arbitrary capacities was called a \emph{flow problem} in \cite{W09}. In the \emph{diluted flow problem} there is a parameter $\theta$, and a feasible solution is a set of edges such that no vertex exceeds its capacity. The cost of a solution is the sum of the lengths of its edges and a penalty for vertices that are not used up to their capacity. The penalty for a vertex is $\theta/2$ times the difference between its capacity and the number of edges incident to it in the solution. 

In this setting, the proof of Proposition \ref{P:payoff} goes through almost word by word, provided that the interpretation of $G-v$ is that the capacity of $v$ has been decreased by 1, and of course that $M(G)$ is replaced by the cost of the diluted flow problem.

\subsection{Generalized Graph Exploration on the $d$-PWIT}
We are led to study Generalized Graph Exploration on the $d$-PWIT. An important difference compared to the capacity 1 case is that when the capacities are greater than 1, it is possible to move upwards (towards the root) in the PWIT. Since the PWIT is a tree, a move upwards implies that the opponent has already used the edge in a downward move, so that after an upward move it is no longer possible to go back to that subtree. Moreover, since an upward move means that the player pays back what the opponent had paid to go downwards through the same edge, an upward move has the effect of canceling the opponent's downward move. In fact an upward move means that all moves played by one player in the subtree are canceled by moves of the opponent along the same edge in the opposite direction.

This leads to an alternative formulation of the game: When the game reaches a vertex $v$, the player who is not making the next move has the right to forbid a number of move options equal to the capacity of $v$ minus 1. In particular, when all capacities are equal to 2, it means that the player not to move can forbid one move option. With the alternative formulation, the moves that are actually carried out constitute a downward path in the tree. If the graph is not a tree, the alternative formulation becomes slightly more complicated: The effect of reversing the opponent's last move (in the original version of the game) is not quite the same as canceling it, since the players have used up one of their potential visits to a vertex that may be visited later on.

Since the PWIT is a tree, the alternative formulation is correct. The game starts by Bob forbidding one of Alice's move options. Then Alice makes a move and from that point forbids one of Bob's move options, and so on. The similarity to the chess variants \emph{refusal chess} and \emph{compromise chess} \cite{chessVariants} is obvious. Games where a player can forbid some of the opponent's move options have been called \emph{comply-constrain games} in the literature on combinatorial games \cite{SS02, GS04}. I was aware of the work on these games but still found their connection to the TSP quite surprising.  

A valuation is now redefined as a function on the vertices of the $\theta$-cluster that satisfies
$$f(v)  = \min(\theta/2, {\min}_2(l_i - f(v_i))),$$
where $\min_2$ denotes second-smallest. With the new definition, the set of valuations is still a lattice with a maximal element $f_B$ and a minimal element $f_A$. The crucial step in the proof of Theorem~\ref{T:mainTSP} is to show that again $f_A$ and $f_B$ are almost surely equal.

In the new setting, Proposition \ref{P:existenceOfValuation} and Lemma \ref{L:Poisson} hold with obvious modifications. We let $\mu_v$ be the measure associated to the process of $(l_i, f_B(v_i))$ with the new definition of $f_B$.

\subsection{The branching of near-optimal play}
Again we let $R$ be the set of paths in the $\theta$-cluster where Bob plays optimally (relative to $f_B$) and every action made by Alice is $\delta$-reasonable. A difference is that now there are two different types of actions. When Alice is about to make a move, Bob forbids the move option that would be most preferable to Alice, and Alice chooses between the remaining ones. The other type of action is that when Bob is about to move, Alice has to forbid one of Bob's move options (if there are any in the $\theta$-cluster). If Bob is about to move from $v$, and $v_i$ and $v_j$ are the best and second-best move options relative to $f_B$, then allowing Bob to play to $v_i$ is $\delta$-reasonable if $l_j-f_B(v_j) \leq l_i-f_B(v_i) + \delta$. Since Bob plays optimally we do not have to distinguish between Alice forbidding different other move options. 

We define $R(k)$ as before, but we modify the definition of $H(k)$ by introducing another parameter $\lambda$, assuming $0<\lambda<1$. We now let \begin{equation} H(k) = \#\left\{\text{$v \in R(k)$ : $f_B(v)<\theta/2$}\right\} + \lambda \cdot  \#\left\{\text{$v \in R(k)$ : $f_B(v)=\theta/2$}\right\} .\end{equation}

When Alice is moving, the best move option has been forbidden by Bob, but the analysis of Section~\ref{S:branching} still goes through, and we arrive at \begin{multline} \frac{EH(k+1)}{EH(k)} \leq \max\left(1-(1-\lambda)\exp(-\theta^d) + o(1), \frac{o(1)}{\lambda}\right) \\ \leq 1-(1-\lambda)\exp(-\theta^d) + o(1).\end{multline}

We now turn to the situation when Bob is about to move. The only type of mistake is now if Alice forbids a move option other than the best one, thereby allowing Bob to play a ``super-optimal'' move. We begin with the case that Bob moves from a vertex $v$ with $f_B(v) = \theta/2$. Then there is no optimal (that is, second-best) move, and the number of super-optimal moves is either zero or one. To upper-bound the branching we may condition on exactly one point above the diagonal $l-f<\theta/2$ in the $l$-$f$-square. The probability that neglecting to forbid the move corresponding to this point is a $\delta$-reasonable decision is $$\frac{\mu_v(\theta/2-\delta\leq l-f \leq \theta/2)}{\mu_v(l-f\leq \theta/2)} \leq \frac{\delta\cdot d\cdot \theta^{d-1}}{\exp(-\theta^d)} = o(1).$$ 

Finally we consider the case that Bob moves from a vertex $v$ such that $f_B(v)<\theta/2$. This means that there are at least two points of the process $(l_i, f_B(v_i))$ above the diagonal in the $l$-$f$-square. 
We allow for an optimal (that is, second-best) move and a super-optimal move to a vertex $v_i$ with $f_B(v_i) = \theta/2$. On the other hand we have to bound the probability of a super-optimal move to a vertex $v_i$ with $f_B(v_i) < \theta/2$.

Since $\mu_v$ is continuous except on the line $f=\theta/2$, we can find an $x<\theta/2$ such that $$\mu_v(x\leq f < \theta/2)$$ is as small as we please.

If $f_B(v)\leq -x$, then the probability of a super-optimal move to a vertex $v_i$ with $f_B(v_i)<\theta/2$ is at most \begin{multline} \frac{\mu_v(l-f\leq -x\, \& \, f<\theta/2)}{\mu_v(l-f\leq -x)} \leq  \frac{\mu_v(x\leq f<\theta/2 \, \& \, l\leq \theta/2-x)}{\mu_v(x\leq f\, \& \, l\leq \theta/2-x)} \\ \leq \frac{\mu_v(x\leq f<\theta/2)}{\mu_v(f = \theta/2)} \leq \frac{\mu_v(x\leq f<\theta/2)}{\exp(-\theta^d)},
\end{multline}
which can be made as small as we please.

Suppose on the other hand that $f_B(v)>-x$. Then the probability of a super-optimal move to a $v_i$ with $f_B(v_i)<\theta/2$ is at most $$\frac{\mu_v(f_B(v)-\delta\leq l-f < f_B(v))}{\mu_v(l-f < f_B(v))} \leq \frac{\delta\cdot d\cdot \theta^{d-1}}{\mu_v(l-f\leq -x)}.$$

By first choosing $x$ and then choosing $\delta$ we can make this too as small as we please. Therefore we can summarize the bounds on the expected contributions to $H(k+1)$ from the various move situations in the following table:

\begin{center}
  \begin{tabular}{@{} ccc @{}}
    \hline
    Player to move  & Vertex & Contribution to $EH(k+1)$\\ 
    \hline
    Alice moves & $f_B(v) < \theta/2$ & $1-(1-\lambda)\exp(-\theta^d)+o(1)$\\ 
     & $f_B(v) = \theta/2$ & $o(1)$ \\ 
     \hline
    Bob moves & $f_B(v) < \theta/2$ & $1+\lambda+o(1)$ \\ 
     &  $f_B(v) = \theta/2$ & $o(1)$\\ 
    \hline
  \end{tabular}
\end{center}
     
If $k$ is even, then Alice moves from vertices at level $k$, and we obtain a recursive bound on $EH(k+1)$ by
\begin{multline} \label{qwe1} \frac{EH(k+1)}{EH(k)} \leq \max\left(1-(1-\lambda)\exp(-\theta^d) + o(1), \frac{o(1)}\lambda\right) \\= 1-(1-\lambda)\exp(-\theta^d)+o(1).\end{multline}
When Bob moves from level $k+1$, we similarly obtain 
\begin{equation} \label{qwe2} \frac{EH(k+2)}{EH(k+1)} \leq \max\left(1+\lambda + o(1), \frac{o(1)}\lambda\right) \\= 1+\lambda+o(1).\end{equation}
Multiplying \eqref{qwe1} and \eqref{qwe2} we obtain a bound on the branching effect of a pair of moves, one by Alice and one by Bob:
$$\frac{EH(k+2)}{EH(k)} \leq 1-\exp(-\theta^d)+\lambda+\lambda^2\exp(-\theta^d)+o(1) < 1,$$ for small $\lambda$. It follows that if first $\lambda$ and then $\delta$ are chosen small enough but positive, then $EH(k)\to 0$ as $k\to \infty$, and consequently $R$ is almost surely finite.

In the same way as in Section~\ref{S:branching} it follows that there is almost surely only one valuation, and that $E\left(f^k_B(root) - f^k_A(root)\right)\to 0$ as $k\to\infty$.

\subsection{Integral equation for the TSP}
Let $f$ be the unique valuation. We can state an integral equation that describes the distribution of $f(root)$. Let $$F(x) = P(f(root)\geq x) = P(\text{at most one $i$ such that $l_i-f(v_i)\leq x$}),$$ assuming that $-\theta/2<x<\theta/2$. The process of $l_i$ such that $f(v_i)>l_i-x$ is an inhomogeneous Poisson point process of rate $dl^{d-1}F(l-x)$ on the interval $0\leq l \leq \theta/2+x$. Since $F(x)$ is the probability of at most one event in this process, $F$ must satisfy $$F(x) = (1+I(x))e^{-I(x)},$$ where $$I(x) = d\int_0^{\theta/2+x}l^{d-1}F(l-x)\dd l.$$ It is natural to define an operator $W_\theta$ by \begin{multline} \notag (W_\theta G)(x) \\= \left(1+d\int_0^{\theta/2+x}l^{d-1}G(l-x)\dd l\right)\cdot\exp\left(-d\int_0^{\theta/2+x}l^{d-1}G(l-x)\dd l\right).\end{multline} If we start with the function which is identically zero and iterate $W_\theta$, we obtain the distributions of $f_A^k(root)$ and $f_B^k(root)$ for successive values of $k$. Almost sure uniqueness of valuation is equivalent to the statement that the sequence converges pointwise, and this in turn implies that $F$ is the unique fixed point of $W_\theta$.

\subsection{The limit average length of an edge in the minimum tour} \label{S:limitTSP}
In complete analogy with the case of matching, the total length $L_n(\theta)$ of the edges in the optimum diluted 2-factor satisfies \begin{equation} \label{anEq} \frac{L_n(\theta)}{n} \overset{\rm p}\to \frac{d^2}2\cdot \underset{\substack{ -\theta/2<x,y<\theta/2 \\ x+y\geq 0\\}}{\int\int} (x+y)^{d-1}F(x)F(y)\dd x\dd y,\end{equation} where $F$ is now the fixed point of $W_\theta$ instead of $V_\theta$. The argument of Section~\ref{S:dilutedCost} carries over without changes. 

The quantity in the right hand side of  \eqref{anEq} is increasing in $\theta$, and converges to a limit $\beta_{TSP}(d)$ as $\theta\to \infty$. To show that $\beta_{TSP}(d)$ is the limit average length of an edge in the minimum traveling salesman tour, we have to do two things: First we show that $\beta_{TSP}(d)$ is the limit average length of an edge in the minimum 2-factor. The argument given in Section~\ref{S:perfectMatching} goes through without essential changes. We only need to modify the proof of Proposition~\ref{P:completing} by introducing extra edges of another color, say blue, to take into account the fact that a vertex may need two more edges to complete the 2-factor (there are also other ways to modify the proof). At the same time, we may remove the assumption that $n$ is even (this is another minor point that can be handled in several ways). Second, we apply the theorem of Frieze \cite{F04} to conclude that $\beta_{TSP}(d)$ is also the limit average edge-length in the minimum tour. This establishes Theorem~\ref{T:mainTSP}.

\section{The minimum edge cover} \label{S:edgeCover}
\subsection{Another optimization problem}
Finally we turn to another optimization problem belonging to the same family (minimizing the total length of an edge set satisfying certain constraints). An \emph{edge cover} is a set of edges of which every vertex  is incident to at least one. The minimum edge cover has been studied in the bipartite pseudo-dimension 1 model in \cite{HW09}, using ``finite $n$'' combinatorial methods. The limit cost is the area of the union of the regions $y\leq e^{-x}$ and $x\leq e^{-y}$ in the positive quadrant. This area is equal to $W(1)^2+2W(1)\approx 1.456$, where $W$ is the Lambert $W$-function defined as the inverse to the function $We^W$. In particular $W(1)\approx 0.567$ is the solution to $x=e^{-x}$, and gives the coordinates of the point of intersection of the two curves $y = e^{-x}$ and $x = e^{-y}$.

The edge cover problem has not been studied in the physics literature, but it has several interesting features and the replica-cavity method produces a prediction of the ground state energy with the same scheme of calculations as for matching and TSP. In contrast to matching and TSP, the number of edges in the optimum edge cover is not determined by the number $n$ of vertices, but depends on the problem instance. Despite this, the edge cover problem is technically simpler than matching and TSP, and in particular allows for an explicit solution (up to numerical constants) also for $d=2$.

It turns out that the edge cover problem corresponds to a two-person game and that using this game, the steps of the solution can be carried out in analogy with matching and TSP. We try to explain how to discover the correct definition of this game starting from the optimization problem. 
Eventually we introduce a diluted form, but for the moment we need not worry about the parameter $\theta$. 

We ask under what conditions an edge $e$ between vertices $u$ and $v$ belongs to the minimum edge cover. For large $n$ we can assume that $e$ is not part of any short cycle, so in practice we think of $e$ as a bridge whose removal will disconnect the graph into two subgraphs. Let $G$ be the subgraph containing $v$. Alice and Bob are now advocating the options of leaving out $e$ and of including $e$ respectively, focusing on $G$. 

Alice, who doesn't use the edge $e$, simply has to find an edge cover on $G$. Bob on the other hand uses $e$ and therefore has to find a set of edges that covers all vertices except possibly $v$. In contrast to the matching problem, Bob's task is not equivalent to the edge cover problem on the subgraph $G-v$, since he can use edges incident to $v$. 

The game therefore starts by Alice explaining how she intends to satisfy the constraint that Bob does not have, namely how to cover $v$. Possibly Alice uses more than one edge from $v$ in her cover, but she only needs to specify one of them in order to get rid of the constraint of having to cover $v$. Suppose Alice chooses to use the edge $(v, w)$. Then apart from covering $v$ she has fulfilled the constraint to cover $w$. That means that Bob is now behind in the sense that he has a constraint that Alice no longer has, and he therefore has to explain how he is going to cover $w$. 

Bob therefore chooses an edge from $w$, and there is nothing that prevents him from choosing the edge $(v, w)$ that Alice is using (but the initial assumption is that Bob is using the edge $(u,v)$, and this is the reason why in the game corresponding to matching, Bob does not have the right to go back to the starting point). If Bob chooses $(v,w)$, the game reaches an equilibrium where the players' remaining constraints are exactly the same, which means that the game is over. Since the length of the edge $(v,w)$ has been paid back and forth between the two players, the outcome of the game is zero. 

If on the other hand Bob chooses another edge $(w, x)$, then again Alice is behind because she has not yet covered $x$ and Bob has. Alice then chooses an edge from $x$, and if that edge goes to $v$ or $w$, equilibrium is reached and the game is over. Otherwise Alice covers a new vertex which Bob must then cover, and so on.

To sum up, an ``infinite $\theta$'', or ``zero temperature'', version of the game is that Alice starts at $v$, players take turns choosing the edges of a path, and the game is over as soon as one player chooses an edge to a vertex that has already been visited. Since the edge cover problem has feasible solutions regardless of the parity of the number of vertices, the zero temperature version already makes sense, but to simplify the analysis we can also introduce a finite $\theta$ version where a constraint may be violated (a player can quit the game) at cost $\theta/2$.

If the game is played on a tree (and for finite $\theta$ and large $n$ it will be, in view of the PWIT approximation), then the rules can be reformulated: The players take turns choosing the edges of a downward path in the tree, and a player has the right to quit at cost $\theta/2$ before their own move, or at zero cost after their move. The option of quitting at zero cost after their own move corresponds to deciding to cancel the opponent's next move. It makes no difference in theory if a player is required to make this decision before the opponent's move. 

The underlying philosophy when constructing the two-person game from the optimization problem is to think of an original game where Alice and Bob simply construct their solutions individually and then display them and compare the total lengths. The basic assumption is that Bob uses a particular edge $e$ which Alice does not use. Assuming optimal play, it clearly doesn't matter who reveals their solution first, or if the players take turns revealing parts of their solutions. Thus by applying principles of ``strategy stealing'', we successively transform the game into a series of other games which are equivalent under optimal play. Our goal, as designers of the game, is to decide the outcome of the game while having the players reveal as little as possible of their solutions.

Proceeding in analogy with our treatment of matching and TSP, we define yet another valuation concept on the PWIT by \begin{equation}\label{second} f(v) = \max(0, \min(\theta/2, l_i-f(v_i))).\end{equation} The crucial question of replica symmetry is whether for each $\theta$ there is (almost surely) only one valuation. If so, then $f(v)$ and $f(v_i)$ have the same distribution, and we can untangle the problem from an equation of ``self-consistency'' derived from \eqref{second}.  

\subsection{Establishing replica symmetry}
In the edge cover game, again only edges of length at most $\theta$ are relevant. We define $f_A$ and $f_B$ in the obvious way, and now $f_B(v)$ is always nonnegative. Therefore the relevant subset of the $l$-$f$-square is the parallelogram given by $0\leq f\leq \theta/2$ and $0\leq l-f\leq \theta/2$.

The crucial question is again the branching of near-optimal play given that, relative to $f_B$, Bob plays optimally and every decision by Alice is $\delta$-reasonable, where $\delta$ is a positive number that we can choose as a function of $d$ and $\theta$.

We want to carry out the strategy that we have used before, consisting in defining a certain quantity $H(k)$ that measures the branching of $R$ at level $k$. It turns out that we can use the same definition of $H(k)$  as for the matching problem, that is, according to \eqref{Hdef}, and that the calculations of Section~\ref{S:branching} then go through essentially unchanged. The edge cover game will terminate whenever it reaches a vertex of value either zero or $\theta/2$, but to establish that $f_A = f_B$ as in Section~\ref{S:branching} we actually only have to use the fact that it terminates at vertices of value $\theta/2$.

\subsection{The limit cost of the minimum edge cover}
In order to find the limit cost of the minimum edge cover, we let $\theta\to\infty$. The fact that we can interchange the limits $n\to\infty$ and $\theta\to\infty$ follows from the results of Section~\ref{S:perfectMatching}, and in fact the edge cover problem is considerably easier than matching in this respect since the analog of Poposition~\ref{P:completing} can be established using a simple greedy scheme for completing the diluted edge cover.

We can anticipate the final results of letting $\theta\to\infty$ by replacing \eqref{second} by \begin{equation} \label{first}  f(v) = \max(0, \min(\theta/2, l_i-f(v_i))),\end{equation} 
essentially assuming replica symmetry at zero temperature. Letting $F(x) = P(f(v)\geq x)$, we first notice that $F(x)=1$ when $x\leq 0$. For $x>0$, \eqref{first} translates into \begin{multline} F(x) = P(\text{no $l_i-f(v_i)<x$}) = P(\text{no $l_i$ with $f(v_i)>l_i-x$}) \\= \exp\left(-\int_0^\infty dl^{d-1}F(l-x)\dd l \right).\end{multline} 
Since $F(l-x)=1$ whenever $l\leq x$, it follows that \begin{multline} \label{coverIntegral} F(x) = \exp\left(-x^d-d\int_x^\infty l^{d-1}F(l-x)\dd l \right) \\= \exp\left(-x^d-d\int_0^\infty (t+x)^{d-1}F(t)\dd t \right).\end{multline}

\subsection{The case $d=1$}
When $d=1$, the integral equation \eqref{coverIntegral} takes the particularly simple form $$F(x) = e^{-A}\cdot e^{-x},$$ where $$A = \int_0^\infty F(t) \dd t$$ is independent of $x$. It follows that $A=e^{-A}$, and therefore $A=W(1)$ and $F(x) = W(1)\cdot e^{-x}$.  

In fact the zero temperature replica symmetry for $d=1$ can be verified directly from the dynamics of the integral operator that maps $F$ to the function $$x\mapsto \exp\left(-\int_0^\infty F(t)\dd t\right)\cdot e^{-x}.$$ This operator takes $A\cdot e^{-x}$ to $e^{-A}\cdot e^{-x}$, and the dynamics therefore only consists in iterating the function $A\mapsto e^{-A}$. It is easily verified that this converges to the fixed point $A=W(1)$ regardless of initial value of $A$.  
The limit cost is given by \begin{equation} \label{coverCost} \frac12 \int_0^\infty l\cdot P(f_1+f_2\geq l)\dd l,\end{equation} where $f_1$ and $f_2$ are independent and taken from the limit distribution. In terms of the function $F(x) = W(1)\cdot e^{-x}$, we have \begin{multline} P(f_1+f_2\geq l)\\ = (1-W(1))\cdot W(1)\cdot e^{-l} + \int_0^l W(1)\cdot e^{-x}\cdot W(1)\cdot e^{l-x}\dd x + W(1)\cdot e^{-l} \\= (1-W(1))\cdot W(1)\cdot e^{-l} + W(1)^2\cdot l \cdot e^{-l} + W(1)\cdot e^{-l} \\= 2W(1)\cdot e^{-l} - W(1)^2\cdot e^{-l} + W(1)^2\cdot le^{-l} .\end{multline}
It follows that \eqref{coverCost} is equal to $W(1) + W(1)^2/2\approx 0.72797$ in agreement with the result of \cite{HW09} for the bipartite graph.

\subsection{The case $d=2$}
Also for $d=2$ the problem allows a more or less explicit solution. For $d=2$, the integral operator is given by $$\exp\left(-x^2-2\int_0^\infty (t+x)F(t)\dd t\right) = \exp\left(-x^2-2Ax -2B\right),$$ where $$A = \int_0^\infty F(t)\dd t$$ and $$B= \int_0^\infty tF(t)\dd t.$$
 
This operator acts on the 2-dimensional space of $A$ and $B$ by $$\begin{pmatrix}A\\B\end{pmatrix} \mapsto \begin{pmatrix} -\frac12\sqrt{\pi}\cdot e^{-2B+A^2}(\mathrm{erf}(A)-1)\\ \frac12e^{-2B}+ \frac12A\sqrt{\pi}e^{-2B+A^2}(\mathrm{erf}(A)-1)\end{pmatrix}$$
At the fixed point we must have $$B = \frac12e^{-2B} - A^2,$$ which makes it easy to numerically find $A\approx 0.41079$ and $B\approx 0.18005$. We get $$P(f_1+f_2\geq l) = (1-F(0))F(l)+\int_0^l(2x+2A)F(x)F(l-x)\dd x,$$ and the limit cost is $$\int_0^\infty l^2\cdot P(f_1+f_2\geq l)\dd l \approx 0.55872.$$
 
\section{Concluding remarks} \label{S:variations}
There are several slightly different random models and optimization problems that yield to similar analysis. If the underlying graph is the complete bipartite graph $K_{n,n}$ instead of the complete graph $K_n$, then a slight change is needed in the estimates in Section~\ref{S:PWITlike}, but otherwise the only differences are trivial scaling factors. For finite $\theta$ it is possible to take the generalization further to various forms of near-regular graphs, but then it is not clear to what extent the $\theta\to\infty$ limit corresponds to a perfect matching or tour.

If the edge-lengths are independent and taken from some distribution satisfying $P(l<r)\sim r^d$ for small $r$, then one can show using fairly standard arguments that our main theorems about convergence in probability of the average length of the edges in the solution still hold, although the expected total length of the solution may not even exist.

It is also possible to generalize our results to other optimization problems, and this seems to lead to a number of open-ended questions. Clearly the results presented in this paper generalize to requiring each vertex to have a fixed degree other than 1 or 2, but there is also a large family of other problems, including minimum edge cover, where the $d=1$ case can be analyzed with the methods of \cite{W09}. It would be interesting to see to what extent one can determine their asymptotics for $d\neq 1$. 

There are several questions that we have left unanswered. Apart from the obvious question whether our results hold also for $0<d<1$, we have already mentioned the question whether the operators $V_\infty$ and $W_\infty$ have unique fixed points that are the limits of the fixed points of $V_\theta$ and $W_\theta$.

Another issue that we have not discussed is the Asymptotic Essential Uniqueness (AEU) of the optimum solutions. The AEU property \cite{A01} is essential for questions about the distribution of edge-lengths and the nearest neighbor statistics for perfect matching and the TSP. The results of \cite{PR01} can probably be verified in the $\theta\to\infty$ limit with our methods, but concluding that they are valid for perfect matching and the TSP requires interchanging the order of the limits $n\to\infty$ and $\theta\to\infty$, which would be justified if the AEU property was established.
  
We also have not discussed any algorithmic aspects. It has been recognized for a long time in the physics community that for several optimization problems, replica symmetry suggests that distributed iterative schemes like Belief Propagation and Simulated Annealing are likely to be successful. We have already mentioned the paper \cite{SS09} which analyzes a Belief Propagation algorithm for minimum matching, and higher capacity problems are studied in \cite{BBCZ08}. A natural suggestion in view of our results and in analogy with simulated annealing is to solve the diluted matching problem using Belief Propagation and successively increase the value of $\theta$.


\begin{thebibliography}{99}
\bibitem{A92}
Aldous, David, \emph{Asymptotics in the random assignment
problem}, Probab. Theory Relat. Fields, \textbf{93} (1992)
507--534.

\bibitem{A01}
Aldous, D., \emph{The $\zeta(2)$ limit in the random assignment
problem}, Random Structures \& Algorithms {\bf 18} (2001), no 4.
381--418.





\bibitem{B06} Bandyopadhyay, A., \emph{Bivariate Uniqueness and Endogeny for the Logistic Recursive Distributional Equation}, Technical Report \# 629, Department of Statistics, UC, Berkeley, 2002, arXiv:math/0401389.

\bibitem{BBCZ08} Bayati, M., Borgs, C., Chayes, J. and Zecchina, R., \emph{On the exactness of the cavity method for Weighted b-Matchings on Arbitrary Graphs and its Relation to Linear Programs}, arXiv:0807.3159v1.

\bibitem{BHH59} Beardwood, J., Halton, J. H. and Hammersley, J. M., \emph{The shortest path through many points}, Proc. Cambridge Philos. Soc. {\bf 55} (1959), 299--327.
 
\bibitem{BKMP91} Brunetti, R., Krauth, W., M\'ezard, M. and Parisi, G., \emph{Extensive Numerical Simulation of Weighted Matchings: Total Length and Distribution of Links in the Optimal Solution}, Europhys. Lett. {\bf 14} 4 (1991), 295--301.
 
 \bibitem{BM89} Bacci, S. and Miranda, E. N., \emph{The traveling salesman problem and its analogy with two-dimensional spin glasses}, J. Statist. Phys. {\bf 56} (1989), no. 3--4, 547--551.
 
\bibitem{BP99} Boettcher, S. and Percus, A., \emph{Nature's Way of Optimizing}, arXiv:cond-mat/9901351. 
 
\bibitem{chessVariants} Bodlaender, H. L. and Howe, D., The Chess Variant Pages, webpage, www.chessvariants.com.

\bibitem{CBBMP97} Cerf, N. J., Boutet de Monvel, J., Bohigas, O,. Martin, O. C. and Percus, A. G., \emph{The Random Link Approximation for the Euclidean Traveling Salesman Problem}, Journal de Physique {\bf 7} (1997), 117--136.


\bibitem{F04} Frieze, A. M., \emph{On random symmetric travelling salesman problems}, Mathematics of Operations Research, vol. 29, No. 4, November 2004, 878--890.

\bibitem{GS04} Gavel, H. and Strimling, P., \emph{Nim With a Modular Muller Twist}, Integers: Electronic Journal of Combinatorial Number Theory {\bf 4} (2004), \#G04.

\bibitem{HW09} Hessler, M. and W\"astlund, J., \emph{The polymatroid assignment problem}, manuscript.

\bibitem{HMM98}
Houdayer, J., Boutet de Monvel, J. H. and Martin, O. C., \emph{Comparing Mean Field and Euclidean Matching Problems}, arXiv:cond-mat/9803195v2.

\bibitem{KM89} Krauth, W. and M\'ezard, M., \emph{The Cavity Method and the Travelling Salesman Problem}, Europhys. Lett. {\bf 8} (3) (1989), 213--218.

\bibitem{KT85} Kirkpatrick, S. and Toulouse, G., \emph{Configuration space analysis of travelling salesman problems}, Journal de Physique {\bf 46} (1985), 1277--1292. 

\bibitem{LW04} Linusson, S. and W\"astlund, J., \emph{A proof of Parisi's conjecture on the random assignment problem}, Probab. Theory Relat. Fields {\bf 128} (2004), 419--440.




\bibitem{MP85} M\'ezard, M. and Parisi, G., \emph{Replicas and optimization}, Journal de Physique Lettres {\bf 46} (1985), 771--778.

\bibitem{MP86a} M\'ezard, M. and Parisi, G., \emph{A replica analysis of the travelling salesman problem}, Journal de Physique {\bf 47}:1285--1296 (1986).

\bibitem{MP86b} M\'ezard, M. and Parisi, G., \emph{Mean-field equations for the matching and the travelling salesman problems}, Europhys. Lett. {\bf 2} (1986) 913--918.

\bibitem{MP87} M\'ezard, M. and Parisi, G., \emph{On the solution of the random link matching problems}, Journal de Physique Lettres {\bf48} (1987),1451--1459.


\bibitem{MPV87} M\'ezard, M., Parisi, G. and Virasoro, M. A., Spin Glass Theory and Beyond, World Scientific, Singapore 1987.

\bibitem{MPZ02} M\'ezard, M., Parisi, G. and Zecchina, R., \emph{Analytic and Algorithmic Solution of Random Satisfiability Problems}, Science 297 (2002), 812--815.

\bibitem{NPS05} Nair, C., Prabhakar, B. and Sharma, M., \emph{Proofs of the Parisi and Coppersmith-Sorkin random assignment conjectures}, Random Structures and Algorithms {\bf 27} No. 4 (2005), 413--444.


\bibitem{PPR03} Pagnani, A., Parisi, G. and Ratieville, M., \emph{Near optimal configurations in mean field disordered systems}, arXiv:cond-mat/0307250.

\bibitem{P80} Parisi, G., \emph{A sequence of approximate solutions to the S-K model for spin glasses}, J. Phys. A {\bf 13} (1980), L115. 

\bibitem{P86} Parisi, G., \emph{Spin glasses and optimization problems without replicas}, in Les Houches, Session 46, 1986 ---Le hasard et la matière/Chance and matter (editors J.~Souletie, J.~Vannimenus and R.~Stora), Elsevier Science Publishers B. V., Netherlands 1987. 


\bibitem{PR01} Parisi, G. and Rati\'eville, M., \emph{Neighborhood preferences in random matching problems}, The European Physical Journal B - Condensed Matter and Complex Systems, Volume 22, Number 2 / July, 2001, 229--237.

 
\bibitem{P97} Percus, A. G., \emph{Voyageur de commerce et probl\`emes stochastiques associ\'es}, PhD thesis, Universit\'e Pierre et Marie Curie, Paris 1997.


\bibitem{PM98} Percus, A. G. and Martin, O. C., \emph{The stochastic travelling salesman problem: Finite size scaling and the cavity prediction}, arXiv:cond-mat/9802295 (1998).

\bibitem{SS09} Salez, J. and Shah, D., \emph{Optimality of Belief Propagation for Random Assignment Problem}, Proceedings of SODA 2009.

\bibitem{SK75} Sherrington, D. and Kirkpatrick, S., \emph{Solvable model of a spin glass}, Phys. Rev. Lett. {\bf 35}  (1975), 1792--1796.

\bibitem{SS02} Smith, F. and St\u{a}nic\u{a}, P., \emph{Comply/constrain games or games with a Muller twist}, Integers: Electronic Journal of Combinatorial Number Theory {\bf 2} (2002), \#G03.

\bibitem{S86} Sourlas, N., \emph{Statistical Mechanics and the Travelling Salesman Problem}, Europhys. Lett., {\bf 2} No. 12 (1986), 919--923.

\bibitem{T06} Talagrand, M., \emph{The Parisi formula}, Annals of Mathematics {\bf 163} (2006), 221--263. 

\bibitem{VM09} Van Mieghem, P., \emph{Weight of a link in a shortest path tree and the Dedekind Eta function}, Random Structures and Algorithms, to appear.

\bibitem{VM84} Vannimenus, J. and M\'ezard, M., \emph{On the statistical mechanics of optimization problems of the travelling salesman type}, Journal de Physique (Lettres) {\bf 45} (1984), 1145--1153.

\bibitem{W06path} W\"astlund, J., \emph{Random assignment and shortest path problems}, Proceedings of the Fourth Colloquium on Mathematics and Computer Science, September 18-22, 2006, Institut \'Elie Cartan, Nancy, France.

\bibitem{Weasy} W\"astlund, J., \emph{An easy proof of the zeta(2) limit in the assignment problem}, Electronic Communications in Probability, {\bf 14} (2009), 261--269.

\bibitem{W09} W\"astlund, J., \emph{The Mean Field Traveling Salesman and Related Problems}, Acta Mathematica, to appear.

\end{thebibliography}
\end{document}